\date{}
  \def\section{\@startsection{section}{2}%
    {\z@}{.5\linespacing\@plus.7\linespacing}{.5em}%
    {\normalfont\bfseries\centering}}
\def\@secnumfont{\bfseries}
\newcounter{z}
\newtheorem{theorem}{\indent Theorem}
\newtheorem{lemma}{\indent Lemma}
\newtheorem{proposition}{\indent Proposition}
\newtheorem{definition}{\indent Definition}
\begin{document}

\title{Classification of $(1{,}2)$-reflective anisotropic hyperbolic lattices of rank $4$}

\author{Nikolay V. Bogachev}

\address{Moscow Institute of Physics and Technology;}

\address{Lomonosov Moscow State University;}

\address{Adyghe State University.}


\email{nvbogach@mail.ru}

\begin{abstract}
A hyperbolic lattice is called \textit{$(1{,}2)$-reflective} if its automorphism group
is generated by $1$- and $2$-reflections up to finite index.
In this paper we prove that the fundamental polyhedron of a $\mathbb{Q}$-arithmetic
cocompact reflection group in the three-dimensional Lobachevsky space contains an
edge such that the distance between its framing faces is small enough.
Using this fact we obtain a
classification of $(1{,}2)$-reflective anisotropic hyperbolic lattices of rank $4$.
\end{abstract}

\maketitle

\textbf{Keywords:}
reflective hyperbolic lattices, roots, reflection groups, fundamental polyhedra, Coxeter polyhedra.


\tableofcontents

\section{Introduction}
\label{s1}

\subsection{Preliminaries}
\label{ss1.1}

By definition, a \textit{quadratic lattice} is a free Abelian group with a non-degenerate integral
symmetric bilinear form
called an \textit{inner product}.
A quadratic lattice $L$ is
called \textit{Euclidean}
if its inner product is positive definite and it is called \textit{hyperbolic}
if its inner product is a form of signature $(n, 1)$.
A lattice $L$ is said to be \textit{isotropic} if the corresponding quadratic form represents zero,
otherwise it is said to be \textit{anisotropic}.

A lattice $L$ is said to be \textit{even} if $(x,x) \in 2 \mathbb{Z}$ for all $x \in L$,
otherwise it is said to be \textit{odd}. Any odd lattice $L$ contains a unique even sublattice
of index $2$ formed by all its vectors whose lengths have even squares.

Let $L$ be a hyperbolic lattice. We shall assume that it is embedded into the
\textit{Minkowski space} $V=L \otimes \mathbb{R}=\mathbb{E}^{n, 1}$,
and we shall take one of the connected components of the hyperboloid
\begin{equation}
\label{eq1}
\{x \in \mathbb{E}^{n, 1}\mid (x, x)=-1\}
\end{equation}
as a \textit{vector model} of the $n$-\textit{dimensional (hyperbolic) Lobachevsky space} $\mathbb{L}^n$
with the metric $\rho$ given by the formula $\cosh\rho(v,w)= -(v,w)$.
In this case, the group $\mathrm{Isom}(\mathbb{L}^n)$ of
motions of the Lobachevsky space is a subgroup $O'(V)$ of index $2$ of the pseudo-orthogonal group $O(V)$ and
it consists of all
transformations leaving invariant each connected component of the hyperboloid \eqref{eq1}.
The \textit{planes} in the vector model of the Lobachevsky space are non-empty
intersections of the hyperboloid with subspaces of $V$. The \textit{points at infinity} in this
model correspond to isotropic one-dimensional subspaces of $V$.

A primitive vector $e$ of a quadratic lattice $L$ is
called a \textit{root} or, more precisely, a
$k$-\textit{root}, where $k=(e,e) \in \mathbb{N}$, if $2(e, x) \in
k\mathbb{Z}$ for all $x \in L$. (For $k \leqslant 2$ the last condition is fulfilled automatically.)
Every root $e$ defines an \textit{orthogonal reflection} (called a $k$-\textit{reflection} if $(e,e)=k$)
in the space
$L \otimes \mathbb{R}$
$$
\mathcal{R}_e\colon x \mapsto x-\frac{2(e, x)}{(e, e)} e,
$$
which preserves the lattice $L$. In the hyperbolic case, $\mathcal{R}_e$ defines a reflection with respect to
the hyperplane
$$
H_e=\{x \in \mathbb{L}^n \mid (x, e)=0\}
$$
in the space $\mathbb{L}^n$ called a \textit{mirror} of the reflection $\mathcal{R}_e$.

Suppose that $\mathcal{O}(L)$ is the group of automorphisms of a lattice $L$. It is known that
$$
\mathcal{O}'(L)=\mathcal{O}(L) \cap \mathcal{O}'(V)
$$
is a discrete group of motions of the Lobachevsky space, and
its fundamental polyhedron has finite volume (see Venkov's paper~\cite{Ven37}). We denote by $\mathcal{O}_r(L)$,
$\mathcal{O}^{(2)}_r(L)$, and $\mathcal{O}^{(1{,}2)}_r(L)$ the subgroups of
$\mathcal{O}'(L)$ generated by all
reflections, all  $2$-reflections, and all  $1$- and $2$-reflections in~$\mathcal{O}'(L)$, respectively.

The lattice $L$ is said to be \textit{reflective}, $2$-\textit{reflective}, or $(1{,}2)$-\textit{reflective}
if the subgroup $\mathcal{O}_r(L)$,
$\mathcal{O}^{(2)}_r(L)$, or $\mathcal{O}^{(1{,}2)}_r(L)$, respectively,
has finite index in~$\mathcal{O}'(L)$. The lattice $L$ is reflective, $2$-reflective, or $(1{,}2)$-reflective
if and only if the fundamental polyhedron of the group $\mathcal{O}_r(L)$,
$\mathcal{O}^{(2)}_r(L)$, or $\mathcal{O}^{(1{,}2)}_r(L)$, respectively, has finite volume in the Lobachevsky space
$\mathbb{L}^n$.

It is obvious that every finite extension of a $2$-reflective
(or $(1{,}2)$-reflective) hyperbolic lattice is also a
$2$-reflective (respectively, $(1{,}2)$-reflective) lattice. We also
note  that every $2$-reflective hyperbolic lattice
is $(1{,}2)$-reflective.

Discrete groups generated by reflections were determined by H.\,S.\,M.~Coxeter. He classified them on spheres
$\mathbb{S}^n$ and in Euclidean spaces~$\mathbb{E}^n$ in~1934 (see \cite{Cox34}). A systematic
study of reflection groups in Lobachevsky spaces was initiated by Vinberg
in~1967 (see \cite{Vin67}). In the same paper, an  arithmeticity criterion was given for discrete groups
generated by reflections, new methods  of studying hyperbolic
reflection groups were suggested and various examples of such groups were  constructed.
In~1972, Vinberg proposed an algorithm
(see \cite{Vin72}, \cite{Vin73}) that, given a lattice $L$, enables
one to construct the fundamental polyhedron of the group $\mathcal{O}_r (L)$ and determine thereby
the reflectivity of the lattice $L$. It is known that cocompact
(with a compact or, equivalently, bounded fundamental polyhedron)
discrete reflection groups (as well as arithmetic reflection groups) do not exist in the Lobachevsky spaces of
dimension
$\geqslant 30$ (Vinberg, 1984, see \cite{Vin84}).
It was also proved that there are no reflective hyperbolic lattices of rank
$n+1 > 22$ (F.~Esselmann, 1996, see\ \cite{Ess96}). In~2007, V.\,V.~Nikulin
(see\ \cite{Nik07}) finally proved that, up to conjugacy, there are
only finitely many maximal arithmetic reflection groups (see also \cite{ABSW2008}).
These results give hope that all maximal arithmetic hyperbolic
reflection groups and, in particular, reflective hyperbolic lattices can be classified.

V.\,V.~Nikulin (1979, 1981 and 1984, see \cite{Nik79,Nik81b,Nik84}) classified all
$2$-reflective hyperbolic lattices of
rank not equal to $4$, and after that in~2000\ (see~\cite{Nik00}) he found all reflective hyperbolic lattices of rank $3$
with square free discriminants. In~1998, (see \cite{Vin98} and \cite{Vin07}) E.\,B.~Vinberg
classified all $2$-reflective hyperbolic lattices of rank $4$.
Subsequently, D.~Allcock (2011, see \cite{All12}) classified at all
all reflective hyperbolic lattices of rank $3$.

In 1989--1993 in the papers~\cite{Sch89,SW92,Wal93}  R.~Scharlau and C.~Walhorn
presented a list of all maximal groups of the form $\mathcal{O}_r(L)$, where $L$
is a reflective isotropic hyperbolic lattice of rank $4$ or $5$.
A similar result was obtained in 2017 in the dissertation of I.~Turkalj (see  \cite{Tur17})
for lattices of rank $6$.

Finally, the author of this paper announced a classification of all maximal
$(1{,}2)$-reflective anisotropic hyperbolic lattices of rank $4$ (2016 and
2017, see \cite{Bog16} and \cite{Bog17}). A more complete history of the problem
 can be read in the recent survey of M.~Belolipetsky~\cite{Bel16}.

\subsection{Notation}
\label{ss1.2}
We introduce some notation:

1)~$[C]$ is a quadratic lattice whose inner product in some
basis is given by a symmetric matrix $C$;

2)~$d(L) := \det C$ is the discriminant of the lattice $L = [C]$;

3)~$L \oplus M$ is the orthogonal sum of the lattices $L$ and $M$;

4)~$[k]L$ is the quadratic lattice obtained from $L$ by multiplying all inner
products by $k \in \mathbb{Z}$;

5)~$L^*=\{x \in L \otimes \mathbb{Q} \mid \forall\, y \in L \ \ (x, y) \in
\mathbb{Z}\}$~ is the \textit{adjoint} lattice.

Let $P$ be a compact acute-angled polyhedron in $\mathbb{L}^3$ and
let $E$ be some edge of it.
We denote by $F_1$ and $F_2$ the faces of the polyhedron $P$
containing the edge $E$. Let
 $u_3$ and $u_4$ be the unit outer
normals to the faces $F_3$ and $F_4$ containing the vertices of the edge $E$,
but not the edge itself.

\begin{definition}
\label{d1.1}
The faces $F_3$ and $F_4$ are called the framing edges of the edge $ E $,
and the number $|(u_3, u_4)|$ is its width.
\end{definition}

We associate with the edge $E$ the
set $\overline \alpha = (\alpha_{12}, \alpha_{13}, \alpha_{23}, \alpha_{14}, \alpha_{24})$,
where $\alpha_{ij}$ is the angle between the faces $F_i$ and $F_j$.

\subsection{Main results}
\label{ss1.3}
The main results of this paper are the following two assertions, the second
of which is proved with the help of the first one.

\begin{theorem}
\label{t1.1}
The fundamental polyhedron of every $\mathbb{Q}$-arithmetic cocompact
 reflection group in $\mathbb{L}^3$ has an edge of width less than $4.14$.
\end{theorem}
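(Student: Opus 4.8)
The plan is to argue by contradiction: suppose $P$ is the fundamental polyhedron of a $\mathbb{Q}$-arithmetic cocompact reflection group in $\mathbb{L}^3$, and suppose every edge of $P$ has width at least $4.14$. I would first translate this hypothesis into concrete trigonometric constraints. For an edge $E$ with associated data $\overline\alpha=(\alpha_{12},\alpha_{13},\alpha_{23},\alpha_{14},\alpha_{24})$, the Gram matrix of the normals $u_1,u_2,u_3,u_4$ is determined by the dihedral angles along $E$ (the angles $\alpha_{13},\alpha_{23},\alpha_{14},\alpha_{24}$ being right or of the form $\pi/m$ since $P$ is a Coxeter polyhedron) together with $\alpha_{12}$; the width $|(u_3,u_4)|$ is then a rational function of $\cos\alpha_{12}$ and the cosines of the framing angles. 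So "width $\ge 4.14$" forces $\cos\alpha_{12}$ to be large, i.e. the dihedral angle along every edge is close to $0$, equivalently the edge is very long (by the cocompactness, $(u_1,u_2)$ controls the length of $E$ via the hyperbolic trigonometry of the quadrilateral cross-section). Quantitatively I expect to deduce that each $\alpha_{12}$ is $\pi/m$ with $m$ bounded below by some explicit integer, hence every edge is longer than some explicit $\ell_0>0$.

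Next I would bring in a global volume or combinatorial bound. A compact acute-angled polyhedron in $\mathbb{L}^3$ has $\sum(\pi-\alpha)=\text{(curvature defect)}$ controlling its volume, and the sum of edge lengths is likewise constrained; more usefully, the number of faces of a bounded Coxeter polyhedron in $\mathbb{L}^3$ whose arithmetic group is defined over $\mathbb{Q}$ is bounded, and the volume is bounded below by a universal constant. I would combine the lower bound on every edge length with an upper bound on the total edge length (or with an Euler-characteristic / angle-sum identity for the $2$-dimensional faces, each of which is a hyperbolic polygon with acute angles) to get that $P$ has very few faces and very few edges — few enough that one can appeal to the known classification of such small Coxeter polyhedra, or simply derive a direct contradiction with the fact that a compact polytope in $\mathbb{L}^3$ must have at least, say, $6$ faces and every $2$-face at least $5$ edges when all dihedral angles are small.

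The cleanest route, and the one I would actually pursue, is: reduce to a single face $F$ of $P$, which is a compact acute-angled hyperbolic polygon, all of whose edges are long (by the edge-length bound just obtained, since each edge of $F$ is an edge of $P$) and all of whose vertex angles are of the form $\pi/m$; for such a polygon the Gauss–Bonnet formula $\mathrm{Area}(F)=(k-2)\pi-\sum_{i=1}^k \pi/m_i$ together with the hyperbolic law of cosines bounding area from below in terms of the shortest edge gives an inequality that fails once the common lower bound on edge lengths exceeds the threshold corresponding to $4.14$. Running the numbers should show that width $\ge 4.14$ is incompatible with the existence of any such polygon, completing the contradiction.

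The main obstacle I anticipate is making the first step fully quantitative and uniform: the width $|(u_3,u_4)|$ depends on \emph{all five} angles in $\overline\alpha$, not just $\alpha_{12}$, so one must show that \emph{no} admissible choice of the framing angles (finitely many cases, since each is $\pi/2$ or $\pi/m$) lets the width reach $4.14$ unless $\alpha_{12}$ is correspondingly small — this is a finite but delicate case analysis in hyperbolic trigonometry, and getting the constant $4.14$ (rather than something weaker) out of it is where the real work lies. The $\mathbb{Q}$-arithmeticity is used to keep the angle denominators $m$ from the short list $\{2,3,4,6,\dots\}$ associated to quadratic forms over $\mathbb{Q}$, which is what closes the finitely many cases.
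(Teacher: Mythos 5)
Your argument has two genuine gaps, and they sit exactly where the content of the theorem lies. First, the opening step is based on a false premise: the width $|(u_3,u_4)|$ is \emph{not} a function of $\cos\alpha_{12}$ and the framing angles alone. The four normals are linearly independent in $\mathbb{E}^{3,1}$, so their Gram matrix carries $T=|(u_3,u_4)|$ as an extra parameter, tied not to the angles but to the \emph{length} $a$ of the edge through the identity $\cosh a = G_{34}/\sqrt{G_{33}G_{44}}$ (cofactors of the Gram matrix of unit normals, via the dual basis whose vectors represent the two vertices). Moreover $\alpha_{12}$ cannot be ``close to $0$'': for a $\mathbb{Q}$-arithmetic group it lies in $\{\pi/2,\pi/3,\pi/4,\pi/6\}$, and in any case the dihedral angle along an edge does not control the edge's length. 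So the implication ``width $\ge 4.14 \Rightarrow$ edge long'' can only be obtained from the cofactor identity above, checked against each of the finitely many admissible angle sets $\overline\alpha$ --- which is precisely the computation you have deferred. In particular the threshold $4.14$ has no source in your scheme: in the paper it arises as $\max_{\overline\alpha} t_{\overline\alpha}$ over the $44$ admissible $\overline\alpha$, where $t_{\overline\alpha}$ solves a linear inequality in $T$ obtained by combining the cofactor identity with an explicit upper bound on the length of one specially chosen edge. Without that computation your contradiction hypothesis ``every edge has width $\ge 4.14$'' yields nothing quantitative.

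Second, even granting a uniform lower bound $\ell_0$ on all edge lengths, your proposed global contradiction does not work as stated. The vertex angles of a $2$-face of a Coxeter polyhedron are the plane angles $\alpha_k$, given by the dual spherical law of cosines from the dihedral angles (Lemma~\ref{l2.1}); they are generally not of the form $\pi/m$. More seriously, there is no inequality bounding the area of a compact hyperbolic polygon \emph{from below} by an unbounded function of its shortest edge: Gauss--Bonnet bounds the area \emph{above} by $(k-2)\pi$, and with angles bounded below the area is simply controlled by $k$, so no contradiction with long edges falls out of area alone. The statement you actually need --- that a compact acute-angled polygon with angles in a fixed finite set must have an edge shorter than an explicit constant matched to $\ell_0$ --- is itself a nontrivial ``narrow parts'' assertion requiring proof. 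The paper avoids all of this by a direct argument: it takes the edge $E$ farthest from an interior point $O$, bounds its length from above using angles of parallelism and the bisector/scan argument (Theorem~\ref{t2.2}), converts that length bound into the linear bound $T<t_{\overline\alpha}$ via the cofactor identity, and checks the $44$ angle sets by computer. If you want a contradiction-style proof, you would still have to reproduce essentially all of that machinery, so as it stands the proposal does not establish the theorem.
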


Recall that a $\mathbb{Q}$-arithmetic reflection group is any finite index subgroup of a group
of the form $\mathcal{O}' (L)$. It can be cocompact only in the case where the lattice $L$ is anisotropic.

In fact, a stronger result is obtained. Namely, it is proved that
there is an edge of width
$ t_{\overline \alpha}$, where
$t_{\overline \alpha} \leq 4.14$ is a number depending on the set $\overline \alpha$ of
dihedral angles around this edge (see Theorem~\ref{t2.3}).

\begin{theorem}
\label{t1.2}
Any $(1{,}2)$-reflective anisotropic hyperbolic lattice of rank $4$ over $\mathbb{Z}$ is either isomorphic
to
$[-7] \oplus [1] \oplus [1] \oplus [1]$ or
$[-15] \oplus [1] \oplus [1] \oplus [1]$,
or to an even index $2$ sublattice of one of them.
\end{theorem}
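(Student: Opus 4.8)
The plan is to reduce the classification to a finite search governed by Theorem~\ref{t1.1}, and then to run Vinberg's algorithm on the finitely many candidate lattices. First I would observe that a $(1{,}2)$-reflective anisotropic hyperbolic lattice $L$ of rank $4$ gives rise to a cocompact reflection group $\mathcal{O}^{(1{,}2)}_r(L)$ acting on $\mathbb{L}^3$ with a compact Coxeter fundamental polyhedron $P$; since $\mathcal{O}^{(1{,}2)}_r(L)$ has finite index in $\mathcal{O}'(L)$, the group $\mathcal{O}'(L)$ is itself a $\mathbb{Q}$-arithmetic cocompact reflection group in the sense defined after Theorem~\ref{t1.1}. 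The polyhedron $P$ is tiled by copies of the fundamental polyhedron of $\mathcal{O}'(L)$, which by Theorem~\ref{t1.1} (or rather the sharper Theorem~\ref{t2.3}) has an edge $E$ of width $t_{\overline\alpha}<4.14$, with $t_{\overline\alpha}$ controlled by the dihedral-angle vector $\overline\alpha$ around $E$. The key point is that the width $|(u_3,u_4)|$ of that edge, together with the angles $\alpha_{ij}$, is expressible through inner products of roots in $L$, i.e. through entries of (a submatrix of) the Gram matrix of a set of roots; smallness of the width forces one of these Gram entries to be small, hence to take only finitely many admissible integer-like values once one accounts for the root conditions $(e,e)=1$ or $2$ and $2(e,x)\in(e,e)\mathbb{Z}$.

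Next I would translate this geometric bound into an arithmetic bound on the lattice invariants. Fixing a root $e_1$ with $(e_1,e_1)\in\{1,2\}$ whose mirror contains the short edge $E$, and the neighbouring roots $e_2,e_3,e_4$ realizing the faces $F_2,F_3,F_4$, the Gram matrix of $e_1,\dots,e_4$ is a symmetric $4\times4$ matrix of signature $(3,1)$ with diagonal entries in $\{1,2\}$, prescribed off-diagonal sign pattern (obtuse angles), and off-diagonal entries constrained by the angle conditions; the entry corresponding to the pair $(F_3,F_4)$ is bounded in absolute value by the width $<4.14$. Running through the finitely many Coxeter diagrams on four generators compatible with these constraints, and using that $L$ must contain the sublattice spanned by $e_1,\dots,e_4$ with finite index (because $P$ is compact, hence $O^{(1,2)}_r(L)$ acts cocompactly and $e_1,\dots,e_4$ span $V$), one gets a finite list of candidate Gram matrices, hence a finite list of candidate lattices $L$ up to the usual operations (passage to even sublattices of index $2$ and to finite overlattices preserving the root system). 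For each candidate I would then invoke Vinberg's algorithm, as in \cite{Vin72,Vin73}, starting from a suitable point, to compute the fundamental polyhedron of $\mathcal{O}^{(1{,}2)}_r(L)$ and check whether it has finite volume; the two lattices $[-7]\oplus[1]^{\oplus 3}$ and $[-15]\oplus[1]^{\oplus 3}$ (and the even index-$2$ sublattices inside them) are exactly the ones that survive, while all other candidates are shown to be non-$(1{,}2)$-reflective by exhibiting infinitely many faces or an infinite-volume cell.

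The main obstacle will be the bookkeeping in the middle step: converting the angle data $\overline\alpha$ and the width bound into a genuinely finite, explicitly enumerable set of Gram matrices, and in particular ruling out that a small width is compatible with infinitely many lattices because of the freedom in the diagonal entries, in the determinant (discriminant), and in the choice of overlattice. Here I expect to need a careful case analysis on the possible local configurations around the short edge — essentially a classification of the admissible $\overline\alpha$ together with the companion root lengths — combined with the arithmeticity constraints (integrality of $2(e_i,e_j)/(e_i,e_i)$, Vinberg's arithmeticity criterion from \cite{Vin67}) to pin down the discriminant. The final Vinberg-algorithm verifications are routine but numerous; the creative content is entirely in showing that the bound of Theorem~\ref{t1.1} cuts the infinite a priori list down to a manageable finite one, and I would organize that reduction as a sequence of lemmas, one per value (or small family of values) of the width.
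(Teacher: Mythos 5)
Your proposal follows essentially the same route as the paper: the outermost-edge bound (Theorem~\ref{t1.1}, in its sharper form Theorem~\ref{t2.3}) turns the four roots framing a short edge into a finite list of Gram matrices with diagonal entries in $\{1,2\}$, one then passes to the finitely many finite-index overlattices (using $[L:L']^2 \mid |d(L')|$), and the resulting candidates are tested with Vinberg's algorithm. Two points in your write-up need repair, though. First, the width bound should be applied directly to the fundamental polyhedron $P$ of $\mathcal{O}^{(1{,}2)}_r(L)$, which is itself a cocompact $\mathbb{Q}$-arithmetic reflection group once $L$ is anisotropic and $(1{,}2)$-reflective; $\mathcal{O}'(L)$ is in general not a reflection group, so ``the fundamental polyhedron of $\mathcal{O}'(L)$'' is not a Coxeter polyhedron with root mirrors and Theorem~\ref{t1.1} does not apply to it as you state. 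Applying Theorem~\ref{t2.3} to $P$ is precisely what guarantees that $u_1,\dots,u_4$ are $1$- or $2$-roots and that $|(u_3,u_4)| < t_{\overline\alpha}\sqrt{(u_3,u_3)(u_4,u_4)}$; note the factor $\sqrt{kl}$, so the Gram entry is bounded by roughly $8.3$, not by $4.14$ itself. Second, for the rejected candidates a bare ``check whether the volume is finite'' is not a finite procedure, since Vinberg's algorithm does not terminate on non-reflective lattices; the paper instead uses Lemma~\ref{l4.1}: it suffices to exhibit an infinite subgroup generated by ``bad'' $k$-reflections with $k>2$ (for $[-55]\oplus[1]\oplus[1]\oplus[1]$ two divergent bad mirrors found after a partial run), it completes the algorithm for $[-3]\oplus[5]\oplus[1]\oplus[1]$ and reads off an infinite bad subgroup from the Coxeter diagram, quotes the known non-reflectivity of $[-23]\oplus[1]\oplus[1]\oplus[1]$ from \cite{Mark15}, and takes the $2$-reflectivity of the four surviving lattices from \cite{Vin07} rather than recomputing their polyhedra.
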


Actually, these lattices are $2$-reflective (see \cite{Vin07}).

The author hopes that the \emph{method of the outermost edge} (see \S\,\ref{s2}) employed 
in this paper
will be applicable for classifying all reflective anisotropic hyperbolic lattices of rank $4$.

\medskip

The author regards  as his pleasant duty to express his deep gratitude to E.\,B.~Vinberg
for posing the problem and also for some
ideas, valuable advice, help, and attention.

\section{The method of the outermost edge and proof of Theorem~\ref{t1.1}}
\label{s2}

\subsection{Nikulin's method}
\label{ss2.1}
In this paper we shall use \textit{the method of the outermost edge},
which is a modification of \emph{the method of narrow parts of polyhedra}, applied by
V.\,V. Nikulin in his papers \cite{Nik80} and \cite{Nik00}.

\begin{definition}
\label{d2.1}
A convex polyhedron in the space $\mathbb{L}^n$ is the intersection of finitely many halfspaces
such that it has non-empty interior.
A generalized convex polyhedron is the intersection of  a family of halfspaces (possibly infinite)
such that any ball intersects only finitely many their boundary hyperplanes.
\end{definition}

\begin{definition}
\label{d2.2}
A generalized convex polyhedron is called acute-angled if all its dihedral angles do not exceed $\pi/2$.
A generalized convex polyhedron is called a Coxeter polyhedron if all its dihedral angles have the form
$\pi/k$, where $k \in \{2, 3, \dots, \infty\}$.
\end{definition}

It is known that fundamental domains of discrete reflection groups are generalized \textit{Coxeter polyhedra}
(see the papers \cite{Cox34},
\cite{Vin85}, and \cite{VS88}).

Here and throughout  by faces of a polyhedron we mean its $(n-1)$-dimensional faces.
The Gram matrix of a system of vectors $v_1, \dots, v_k$ will be denoted by $G(v_1, \dots, v_k)$.

In his earlier works (see Lemma~3.2.1 in~\cite{Nik80} and the proof
of Theorem~4.1.1 in~\cite{Nik81b}) V.\,V.~Nikulin proved the following
assertion\footnote{We present this assertion in a form convenient  for us, although
it was not formulated in this way anywhere.}.

\begin{theorem}
\label{t2.1}
Let $P$ be an
acute-angled convex polyhedron of finite volume in~$\mathbb{L}^n$.
Then there exists a face $F$ such that
$$
\cosh \rho (F_1, F_2) \le 7,
$$
for any faces $F_1$ and $F_2$ of $P$ adjacent to $F$, where
$\rho{(\,\cdot\,, \cdot\,)}$ is the metric in the Lobachevsky space\footnote{In Nikulin's papers, the squares of the lengths of the normals to faces are $2$, therefore, in his works
there is a bound $(\delta, \delta') \leq 14$.}.
\end{theorem}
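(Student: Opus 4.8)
The plan is to exploit the finiteness of the volume together with the acute-angled hypothesis to find a face $F$ whose "combinatorial neighborhood" is metrically controlled. First I would pass to the vectors $e_i$ of unit outer normals to the faces $F_i$ of $P$, so that the Gram matrix $G = (g_{ij})$ with $g_{ij} = (e_i, e_j)$ encodes everything: two faces $F_i$, $F_j$ meet at a dihedral angle $\alpha_{ij}$ with $g_{ij} = -\cos\alpha_{ij}$ when they are actually adjacent (so $g_{ij} \in (-1, 0]$ by the acute-angled hypothesis), $g_{ij} = -\cosh\rho(F_i, F_j) \le -1$ when the hyperplanes $H_i$, $H_j$ diverge, and $g_{ij} = -1$ when they are parallel (meet at infinity). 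Thus proving $\cosh\rho(F_1, F_2) \le 7$ for all $F_1, F_2$ adjacent to a common face $F$ amounts to finding a row of $G$, say the one indexed by $F$, such that whenever $F_1$ and $F_2$ are both adjacent to $F$ one has $|g_{1,2}| \le 7$.

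The key step is a counting/packing argument on the set $N(F)$ of faces adjacent to a fixed face $F$. Restricting the quadratic form to the hyperplane $e_F^{\perp}$ (which carries a hyperbolic form of signature $(n-1,1)$ since $e_F$ is spacelike), the face $F$ is itself a finite-volume acute-angled polyhedron in $\mathbb{L}^{n-1}$, and its own faces correspond exactly to the elements of $N(F)$. So I would argue by a dimension-reduction/induction on $n$: the statement for $P \subset \mathbb{L}^n$ follows from applying a suitable bound inside the face $F \subset \mathbb{L}^{n-1}$. Concretely, in dimension $n-1$ one uses that an acute-angled finite-volume polyhedron cannot have too many faces pairwise "far apart," because each face subtends a region and the total volume is finite; this forces the existence of a face $F$ all of whose neighbors lie within bounded distance of each other. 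The numerical constant $7$ (equivalently $\arccosh 7$) comes out of the base case of the induction, where one checks by hand the possible local Gram matrices — this is exactly the combinatorial case analysis Nikulin carries out in Lemma~3.2.1 of~\cite{Nik80} and in the proof of Theorem~4.1.1 of~\cite{Nik81b}.

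The main obstacle is the base/low-dimensional case: one must show that among all faces of a finite-volume acute-angled polyhedron there is at least one, $F$, such that no two faces meeting $F$ have their hyperplanes diverging by more than $\cosh\rho = 7$. This is where the acute-angled hypothesis is essential — it bounds the positive contributions in the Gram matrix and forces enough of the $g_{ij}$ to lie in $(-1,0]$ — and it is where the sharp constant is pinned down by examining the finitely many combinatorial types of vertex and edge figures. I would extract this case analysis directly from Nikulin's cited arguments rather than redo it, and then assemble the inductive step as above, being careful that "adjacent" is taken in the strong sense (sharing an $(n-2)$-face) so that the induction on faces is clean. The normalization footnote is handled by simply working with unit normals throughout, which replaces Nikulin's bound $(\delta,\delta')\le 14$ by $\cosh\rho \le 7$.
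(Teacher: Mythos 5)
There is a genuine gap. Note first that the paper does not reprove Theorem~\ref{t2.1} at all: it is quoted as Nikulin's result, with the essential proof idea recorded immediately after the statement, namely that $F$ is chosen as the \emph{outermost} face from a fixed interior point $O$, and the numerical bound then comes from explicit hyperbolic-trigonometric (angle-of-parallelism) estimates around that face --- exactly the kind of computation this paper redoes in sharpened form in Theorem~\ref{t2.2}. Your plan proposes a different mechanism, but its two load-bearing steps are not actually argued. The packing heuristic (``finite volume forces a face all of whose neighbors lie within bounded distance of each other'') is unsubstantiated: finite volume does not bound the number of faces of an acute-angled polyhedron, and you give no mechanism converting finiteness of volume into a distance bound, let alone into the specific constant $7$; in Nikulin's argument finite volume plays no such role, the selection device being the outermost face. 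Likewise, the claim that the constant $7$ drops out of ``checking by hand the possible local Gram matrices'' in a base case cannot work as stated: for divergent faces the entries $-\cosh\rho(F_i,F_j)$ range over a continuum, so there is no finite list of local Gram matrices to inspect, and the bound must come from a metric estimate that your proposal never performs. Deferring precisely this step to Nikulin's Lemma~3.2.1 and Theorem~4.1.1 means the quantitative heart of the theorem is assumed rather than proved.

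The inductive step is also structurally mismatched. Applying the statement of Theorem~\ref{t2.1} to the facet $F$, viewed as a finite-volume acute-angled polyhedron in $\mathbb{L}^{n-1}$, yields a face $G$ of $F$ (an $(n-2)$-face of $P$) such that the faces of $F$ adjacent to $G$ are pairwise close; lifting back to $P$ this controls only those neighbors of $F$ whose intersections with $F$ are adjacent to $G$ --- a statement of the type of Proposition~\ref{p2.1} about codimension-two faces --- and not the assertion that \emph{every} pair of faces adjacent to some single face of $P$ is close. So even granting your base case, the induction does not return the theorem being proved. (A further, secondary point: the way Theorem~\ref{t2.1} is used in the proof of Proposition~\ref{p2.1} requires the stronger fact that the outermost face from an arbitrarily chosen interior point works, which a pure existence argument of the kind you sketch would not deliver.) To repair the proof you would need to reinstate Nikulin's actual mechanism: fix an interior point, take the outermost face, and bound the mutual distances of its neighbors by angle-of-parallelism estimates, as in the proof of Theorem~\ref{t2.2}.
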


In the proof of this assertion, the face $F$ was chosen as the outermost face
from some fixed point $O$ inside the polyhedron $P$. Notice that
this theorem enables us to bound at once the absolute value of the inner product of
outer normals to faces adjacent to the face $F$. Indeed, if $F_1$ and
$F_2$ intersect or are parallel, this value is equal to the cosine of
the dihedral angle between these faces, and if these faces diverge, then it
equals the hyperbolic cosine of the distance between them.

\subsection{The method of the outermost edge}
\label{ss2.2}
We have the following corollary of Theorem~\ref{t2.1}.

\begin{proposition}
\label{p2.1}
Each compact (i.\,e., bounded) acute-angled polyhedron
$P \subset \mathbb{L}^3$ contains an edge of width not greater than $7$.
\end{proposition}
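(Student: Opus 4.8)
The plan is to derive this as a direct corollary of Theorem~\ref{t2.1} applied to the three-dimensional case. First I would invoke Theorem~\ref{t2.1} with $n = 3$: since $P \subset \mathbb{L}^3$ is a compact acute-angled polyhedron, it has finite volume, so there exists a face $F$ of $P$ such that $\cosh\rho(F_i, F_j) \le 7$ for all faces $F_i, F_j$ adjacent to $F$. The face $F$ is a two-dimensional polygon (compact, since $P$ is compact), hence it has at least three edges, and each such edge $E$ of $F$ is an edge of the polyhedron $P$.

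Next I would fix one such edge $E$ of the face $F$ and identify its framing faces in the sense of Definition~\ref{d1.1}. The edge $E$ lies on two faces of $P$: one of them is $F$ itself, and I call the other $F'$. The two endpoints of $E$ are vertices of $P$; at each endpoint several faces meet, and among them there is exactly one face (other than $F$ and $F'$) that contains that vertex of $E$ but not the edge $E$ itself — call these $F_3$ and $F_4$. (Here I use that $P$ is compact, so the edge $E$ genuinely has two vertices and is not an infinite ray.) By Definition~\ref{d1.1}, $F_3$ and $F_4$ are the framing faces of $E$ and the width of $E$ is $|(u_3, u_4)|$, where $u_3, u_4$ are the unit outer normals to $F_3$ and $F_4$.

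Now the key point: both $F_3$ and $F_4$ are faces of $P$ adjacent to $F$, since each of them shares a vertex of $E$ — hence at least that vertex, and in fact an edge — with $F$. (A framing face $F_3$ meets $F$ along the edge of $F$ emanating from the corresponding endpoint of $E$ other than $E$ itself.) Therefore Theorem~\ref{t2.1} applies to the pair $F_3, F_4$ and gives $\cosh\rho(F_3, F_4) \le 7$. As noted in the remark following Theorem~\ref{t2.1}, the quantity $|(u_3, u_4)|$ equals $\cos\alpha_{34}$ if $F_3$ and $F_4$ intersect or are parallel, and equals $\cosh\rho(F_3, F_4)$ if they diverge; in every case $|(u_3, u_4)| \le \max\{1, \cosh\rho(F_3,F_4)\} \le 7$. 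Hence the width of $E$ is at most $7$, which is what we wanted.

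The only point requiring a little care is the combinatorial claim that the framing faces $F_3$ and $F_4$ are genuinely adjacent to $F$ (and well-defined) — this is where compactness of $P$ is used, to ensure that $E$ has two honest vertices, that $F$ is a bounded polygon with the edge-structure we expect, and that at each vertex of $E$ there is a unique further face playing the role of the framing face. Once this local picture around the outermost face is set up, the estimate is immediate from Theorem~\ref{t2.1}.
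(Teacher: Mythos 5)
Your argument is correct, but it is not the paper's argument: you apply Theorem~\ref{t2.1} directly in dimension $3$ (take the distinguished face $F$, note that the two framing faces of any edge of $F$ each share an edge of $F$ with it, hence are adjacent to $F$, and conclude), whereas the paper never invokes the three-dimensional statement at this point. Instead it fixes an interior point $O$, takes the edge $E$ of $P$ farthest from $O$ and a face $F \ni E$, projects $O$ to a point $O'\in F$, and uses the three-perpendiculars theorem to show that $E$ is the outermost edge of the polygon $F$ as seen from $O'$; then Theorem~\ref{t2.1} (more precisely, the outermost-face choice made in its proof) is applied inside the two-dimensional face $F$, giving $\cosh\rho(E_1,E_2)\le 7$ for the two edges $E_1,E_2$ of $F$ adjacent to $E$, and finally $\rho(F_3,F_4)\le\rho(E_1,E_2)$ because $E_1\subset F_3$, $E_2\subset F_4$. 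Your route is shorter and needs only the statement of Theorem~\ref{t2.1} for $n=3$; the paper's route needs only the planar case (closer to what Nikulin actually proves --- the footnote concedes the general formulation is not in the literature as stated) and, more importantly, it singles out a specific edge, the outermost one from $O$, whose extra property \eqref{eq3} is precisely what is exploited in \S\,\ref{ss2.3}--\ref{ss2.4} to sharpen the bound from $7$ to $t_{\overline\alpha}\le 4.14$; the edge produced by your argument carries no such property, so your proof, while sufficient for Proposition~\ref{p2.1}, would not feed into Theorem~\ref{t2.2}. One small correction: the uniqueness of the framing face at each endpoint of $E$ (equivalently, simplicity of the vertices of $P$) is a consequence of acute-angledness, not of compactness --- the vertex figure is an acute-angled spherical polygon, hence a triangle; compactness is only needed so that $E$ has two ordinary vertices. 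This structure is in any case presupposed by Definition~\ref{d1.1}.
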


\begin{proof}
Following V.\,V.~Nikulin (see\ \cite{Nik81b}), we consider an interior point $O$ in $P$. Let $E$ be
\textit{the outermost\footnote{In an acute-angled polyhedron,
the distance from the interior point to the face (of any dimension) is equal to the
distance to the plane of this face.}
edge from it}, and let $F$ be a face containing this edge.
Let $E_1$, $E_2$ be disjoint\footnote{Note that we consider the case where the framing
faces are divergent, since otherwise the absolute value of the inner product does not exceed one.}
edges of this face coming out from different vertices of $E$.

Let  $O'$ be the projection of  $O$ onto the face $F$. Note that
$O'$ is an interior point of this face, since otherwise the point
$O$ would lie outside of some dihedral angle adjacent to  $F$ (because the polyhedron $P$ is acute-angled).
Further, since $E$ is
the outermost edge of the polyhedron for $O$, then
$$
\rho(O, E) \ge \rho(O, E_i), \qquad i=1, 2.
$$
It follows from this and  the three
perpendiculars theorem that the distance between the point~$O'$
and the edge $E$ is not
less than the distance between this point and any other edge of the face $F$.
This means that the edge $E$ is the outermost edge to the point $O'$
inside the polygon $F$ and we can use Theorem~\ref{t2.1}.


Further, let $F_3$ and $F_4$ be the faces (with unit outer normals $u_3$ and $u_4$,
respectively) of the polyhedron $P$ framing the
outermost edge $E$ and containing the edges $E_1$ and $E_2$, respectively.
Clearly, the distance between the faces is not greater than the distance
between their edges.
Therefore,
$$
-(u_3,u_4)=\cosh \rho(F_3, F_4) \le \cosh \rho(E_1, E_2) \le 7.
$$
The proposition is proved.
\end{proof}

Let now $P$ be the fundamental polyhedron of the group
$\mathcal{O}_r(L)$ for an anisotropic hyperbolic lattice $L$
of rank $4$. The lattice $L$ is reflective if and only if the polyhedron
$P$ is compact (i.\,e., bounded).

Let $E$~be an edge (of the polyhedron $P$) of width not greater than $t$. By
Propositon~\ref{p2.1} we can ensure that $t \leq 7$ (if we take the outermost
edge from some fixed point $O$ inside the polyhedron $P$). Let $u_1$, $u_2$
be the roots of the lattice $L$ that are orthogonal to the faces containing
the edge $E$ and are the outer normals of these faces.
Similarly, let $u_3$, $u_4$ be the roots corresponding to the framing
faces. We denote these faces
by $F_1$, $F_2$, $F_3$, and $F_4$, respectively. If $(u_3, u_3) = k$, $(u_4, u_4) = l$, then
\begin{equation}
\label{eq2}
|(u_3, u_4)| \leq t\sqrt{kl} \leq 7 \sqrt{kl}.
\end{equation}

Since we solve the classification problem for $(1{,}2)$-reflective lattices,
we have to consider the fundamental polyhedra of arithmetic
groups generated by $1$- and $2$-reflections. In this case we are given bounds
on all elements of the matrix $G(u_1, u_2, u_3, u_4)$,
because all the faces $F_i$ are pairwise intersecting, excepting,
possibly, the pair of faces $F_3$ and $F_4$. But if they do not intersect,
then the distance between
these faces is bounded by inequality~\eqref{eq2}.
Thus, all entries of the matrix $G(u_1, u_2, u_3, u_4)$ are integer and bounded, so
there are only finitely many possible matrices
$G(u_1, u_2, u_3, u_4)$.

The vectors $u_1, u_2, u_3, u_4$ generate some sublattice $L '$
of finite index in the lattice $L$. More precisely, the lattice $ L $ lies between
the lattices $L'$ and $(L')^*$, and
$$
[(L')^* : L']^2=|d(L')|.
$$
Hence it follows that $|d(L')|$ is divisible by $[L:L']^2$. Using this, in each case we shall
find  for a lattice
$L'$ all its possible extensions of finite index.

To reduce the enumeration of matrices $G(u_1, u_2, u_3, u_4)$ we shall use some
additional considerations enabling us to get sharper
bounds on the number $|(u_3, u_4)|$ than in inequality~\eqref{eq2}.

\subsection{Bounds for the length of the edge
$E$ for a compact acute-angled polyhedron in $\mathbb{L}^3$}
\label{ss2.3}
In this subsection $P$ denotes a compact acute-angled polyhedron in the three-dimensional
Lobachevsky space $\mathbb{L}^3$.

Keeping the assumptions and notation of the previous sections, we denote the vertices of the edge $E$ by
$V_1$ and $V_2$. The dihedral angles between the faces $F_i$ and $F_j$ will be denoted by
$\alpha_{ij}$.

Let $E_1$ and $E_3$ be the edges of the polyhedron $P$ outgoing
from the vertex $V_1$ and let $E_2$ and $E_4$ be the edges outgoing from $V_2$ such that the edges
$E_1$ and $E_2$ lie in the face $F_1$. The length of the edge $E$ is
denoted by $a$, and the plane angles between the edges
$E_j$ and $ E $ are denoted by $\alpha_j$ (see Figure~\ref{fig1}).

\begin{figure}[!htp]
\begin{center}
\includegraphics{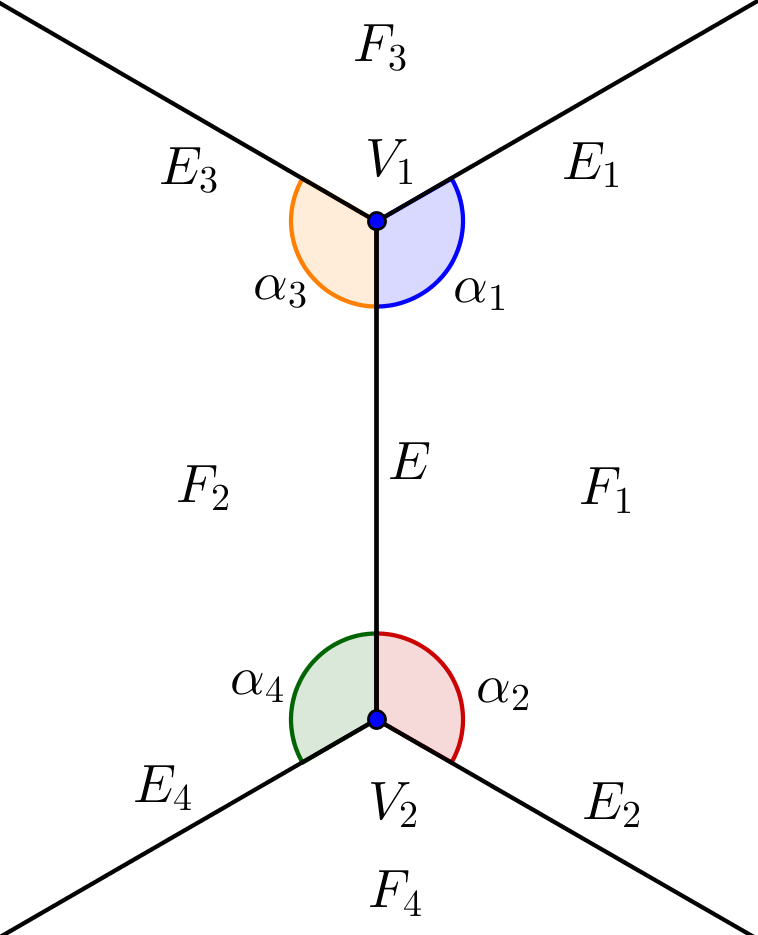}
\caption{The outermost edge}
\label{fig1}
\end{center}
\end{figure}

\begin{theorem}
\label{t2.2}
The length of the outermost edge satisfies the inequality
$$
a < \operatorname{arcsinh} \biggl(\frac{\operatorname{tanh} (\ln (\cot ({\alpha_{12}}/{4})))}
{\tan ({\alpha_3}/{2})}\biggr)+\operatorname{arcsinh} \biggl(\frac{\operatorname{tanh} (\ln (\cot
({\alpha_{12}}/{4})))}{\tan ({\alpha_4}/{2})} \biggr).
$$
\end{theorem}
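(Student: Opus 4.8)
The plan is to work inside the face $F_1$, which is a hyperbolic polygon, and to reduce the three-dimensional problem to a two-dimensional one. First I would observe that since $E$ is the outermost edge from the interior point $O$, the projection $O'$ of $O$ onto $F_1$ is an interior point of $F_1$ and $E$ is the outermost edge of the polygon $F_1$ from $O'$ (this is exactly what was established in the proof of Proposition~\ref{p2.1}). So I may forget the ambient polyhedron and consider only the convex polygon $F_1 \subset \mathbb{L}^2$, with the distinguished edge $E$ of length $a$, the two edges $E_1, E_2$ emanating from the endpoints $V_1, V_2$ of $E$, and the point $O'$ which is at least as far from $E$ as from any other side of $F_1$. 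The relevant angles are $\alpha_{12}$ (the dihedral angle at $E$, which in the polygon $F_1$ is the angle between the line of $E$ and... — more precisely it is the angle at which $F_2$ meets $F_1$, but what matters for the polygon is that this controls how $F_1$ sits) and the plane angles $\alpha_3$ at $V_1$ and $\alpha_4$ at $V_2$ between $E$ and $E_1$, resp. $E$ and $E_2$.

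The key computation: drop the perpendicular from $O'$ to the line containing $E$, meeting it at a foot $O''$, and let $h = \rho(O', E)$. Since $O'$ is no farther from $E_1$ than from $E$, and likewise for $E_2$, I get two constraints. In a hyperbolic right triangle with the appropriate angle, the distance from $O''$ along $E$ to the vertex $V_1$ is controlled: the condition $\rho(O', E_1) \le \rho(O', E) = h$ translates, via the hyperbolic trigonometry of the triangle $V_1 O'' O'$ together with the angle $\alpha_3$ at $V_1$, into a bound of the form $\sinh(\text{distance from } O'' \text{ to } V_1) \le \sinh(h)/\tan(\alpha_3/2)$ or something very close to it — one uses the formula relating the distance from a point to the two sides of an angle. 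Symmetrically the distance from $O''$ to $V_2$ is bounded by $\sinh h / \tan(\alpha_4/2)$ after applying $\operatorname{arcsinh}$. Adding these two gives $a = \rho(V_1, V_2) < \operatorname{arcsinh}(\sinh h / \tan(\alpha_3/2)) + \operatorname{arcsinh}(\sinh h/\tan(\alpha_4/2))$, so it remains to bound $\sinh h$ itself.

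The bound on $h$ is where $\alpha_{12}$ enters. The point $O'$ lies inside $F_1$, which near the edge $E$ is cut off by the neighboring sides; but more to the point, $O$ itself lies inside the dihedral wedge along $E$ whose angle is $\alpha_{12}$, and $O'$ is the projection of $O$ into $F_1$. Using that $O$ is inside the wedge of angle $\alpha_{12}$ at $E$, together with the fact (again from acute-angledness) that the distance $\rho(O', E)$ cannot be too large relative to the geometry, one extracts $\sinh h < \operatorname{tanh}(\ln(\cot(\alpha_{12}/4)))$. Here the expression $\ln(\cot(\alpha_{12}/4))$ is the standard formula for (half of) the width of the strip, or the distance associated to an ideal-type configuration with angle $\alpha_{12}$: indeed $\cot(\alpha_{12}/4)$ is exactly the quantity appearing in the half-angle formulas for a hyperbolic triangle with one ideal vertex, and $\operatorname{tanh}$ of its logarithm is a clean closed form. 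Substituting this bound for $\sinh h$ into the displayed sum yields the asserted inequality.

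The main obstacle I anticipate is getting the $\alpha_{12}$-dependence sharp, i.e. justifying the precise bound $\sinh h < \operatorname{tanh}(\ln(\cot(\alpha_{12}/4)))$ rather than a cruder one. This requires carefully identifying the worst-case position of $O$ (and hence $O'$) subject to the constraint that $E$ is the outermost edge and the wedge angle at $E$ is $\alpha_{12}$: one must argue that pushing $O$ toward the limiting configuration only increases $h$, and compute the limit. The two-dimensional trigonometric identities (the relation between the distance from an interior point to the two sides of an angle and the angle itself, and the right-triangle formula $\sinh(\text{opposite}) = \sinh(\text{hypotenuse})\sin(\text{angle})$ in its hyperbolic form) are routine once the configuration is fixed, but pinning down that configuration and verifying the monotonicity is the delicate part; everything after that is a substitution.
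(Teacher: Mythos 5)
The second half of your plan (turning a bound on a perpendicular distance to $E$ into the two $\operatorname{arcsinh}$ terms via the bisector/right-triangle relation $\tanh(\mathrm{dist}) = \tan(\text{half-angle})\sinh(\text{foot position})$, and adding the two pieces of $E$) is sound and is essentially what the paper does in its last step; note, though, that the exact relation involves $\tanh h$, not $\sinh h$, and you need the $\tanh$ version to land on the stated constant $A_0=\operatorname{tanh}(\ln\cot(\alpha_{12}/4))$. The genuine gap is the step you yourself flag as delicate: the bound $h < \ln(\cot(\alpha_{12}/4))$ for $h=\rho(O',E)$, where $O'$ is the projection of $O$ onto the \emph{single} face $F_1$. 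This is not a consequence of ``$O$ lies in the dihedral wedge of angle $\alpha_{12}$'': for a fixed acute dihedral angle the projection of an interior point onto one chosen face can be arbitrarily far from $E$ (the outermost-edge condition inside that face only pushes $O'$ \emph{beyond} the intersection point of the two angle bisectors, i.e.\ it bounds $\rho(O',E)$ from below, not from above). So no worst-case/monotonicity analysis confined to one face can produce the claimed estimate; the statement you would need is simply false for a fixed face.

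The missing idea is that the two faces must be played against each other. In the paper one projects $O$ onto \emph{both} faces $F_1,F_2$, obtaining $O_1,O_2$ whose feet on $E$ coincide (three perpendiculars theorem) at an interior point $A$; the planar quadrilateral $AO_1OO_2$ with right angles at $O_1,O_2$ gives the angle-of-parallelism inequality $\alpha_{12} < \Pi(a_1)+\Pi(a_2)$ with $a_i=\rho(O_i,E)$, which couples the two distances (neither is bounded alone, but they cannot both be large). The outermost-edge condition in each face then gives $a_1\ge h_I$, $a_2\ge h_J$, where $h_I,h_J$ are the distances to $E$ of the intersection points of the plane-angle bisectors in $F_1$ and $F_2$; taking WLOG $h_J\le h_I$ yields $\alpha_{12} < 2\Pi(h_J)$, i.e.\ $h_J<\ln\cot(\alpha_{12}/4)$, and the length estimate is then carried out at the bisector point $J$ (where the trigonometric relations are equalities), not at $O'$. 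This also explains why the bound is stated with the angles $\alpha_3,\alpha_4$ of the face realizing the smaller bisector distance — a choice your single-face setup cannot make. As written, your proposal identifies where the difficulty sits but does not supply the mechanism that resolves it, and the route it proposes would fail at exactly that point.
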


\begin{proof}
Denote by $O_1$ and $O_2$ the orthogonal projections of the point $O$ onto the faces $F_1$ and
$F_2$, respectively. By the theorem of three perpendiculars, both points fall
under the projection onto this edge $E$ on the same point $A$, which is the projection
of the point $O$ onto this edge. Due to the fact that the polyhedron $P$ is acute-angled, the point
$A$ is an inner point of the edge~$E$.

Thus, we get a flat quadrilateral $AO_1 O O_2$, in which
$\angle A = \alpha_{12}$ (the dihedral angle between the faces $F_1$ and $F_2$),
$\angle O_1=\angle O_2=\pi/2$, $AO_1=a_1$, $AO_2=a_2$ (see\ Figure~\ref{fig2}).

\begin{figure}[!htp]
\begin{center}
\includegraphics[scale=1.4]{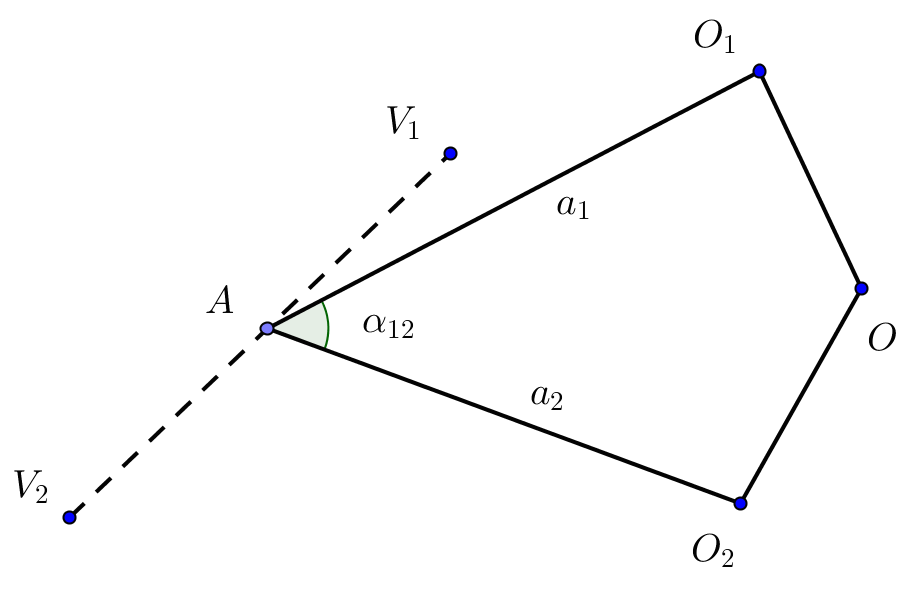}
\caption{A quadrilateral $AO_1 O O_2$}
\label{fig2}
\end{center}
\end{figure}

In the limiting case where the point $O$ is a point at infinity, the dihedral
angle~$\alpha_{12}$ is composed of the so-called \textit{angles of parallelism}
$\Pi(a_1)$ and~$\Pi(a_2)$. In our case $O \in
\mathbb{L}^3$, therefore,
$$\alpha_{12} < \Pi(a_1)+\Pi(a_2)=2 \arctan(e^{-a_1}) + 2 \arctan(e^{-a_2}).$$

Denote by $V_1 I$, $V_2 I$, $V_1 J$, $V_2 J$ the bisector of angles $\alpha_1$,
$\alpha_2$, $\alpha_3$, $\alpha_4$, respectively. Let $h_I$ and $h_J$ be
the distances from the points $I$ and $J$ to the edge $E$. Without loss of generality
 we can assume that
$h_J \leqslant h_I$.

Since the edge $E$ is the outermost edge for the point $O$, we have
\begin{equation}
\label{eq3}
\rho(O_1, E) \leq \rho(O_1, E_1), \rho(O_1, E_2), \qquad
\rho(O_2, E) \leq \rho(O_2, E_3), \rho(O_2, E_4).
\end{equation}
Then it is clear that $h_J \leq h_I \leq a_1$, $h_J \leq a_2$, since
inequalities~\eqref{eq3} imply that the points $O_1$ and $O_2$ lie inside flat
angles vertical to the angles $V_1 I V_2$ and $V_1 J V_2$, respectively (the scan of
faces around the edge $E$ is represented in Figure~\ref{fig3}).

\begin{figure}[!htp]
\begin{center}
\includegraphics[scale=1.9]{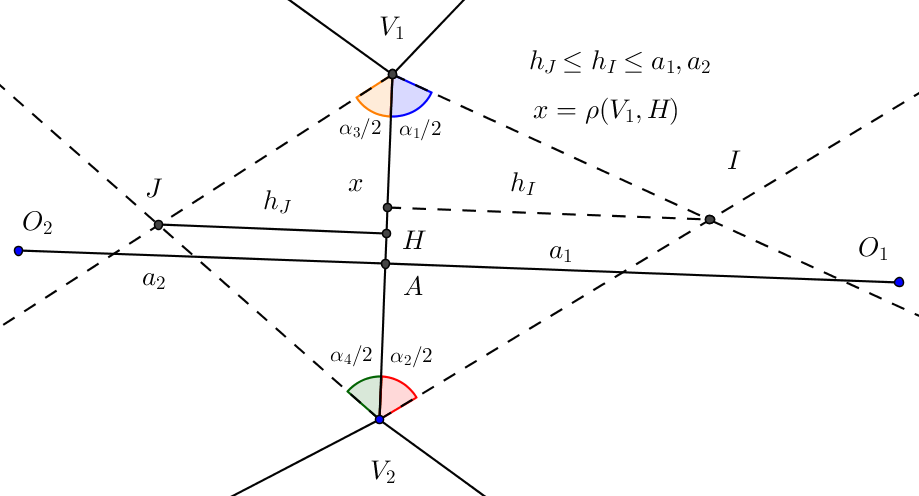}
\caption{The scan}
\label{fig3}
\end{center}
\end{figure}

We have $\Pi(a_1), \Pi(a_2) \leq \Pi(h_J)$. It follows that
$\arctan(e^{-h_J}) > \alpha_{12}/4$. Thus,
$$
h_J < \ln \biggl(\cot \biggl(\frac{\alpha_{12}}{4} \biggr)\biggr).
$$

We introduce the notation $A_0 := \operatorname{tanh} \bigl(\ln(\cot({\alpha_{12}}/{4}))\bigr)$. Then
$\operatorname{tanh} h_J < A_0$. Let $H$ be the projection of the point $J$ onto the edge $E$ and let
$x=\rho(H,V_1)$.

From the right triangles $V_1 J H$ and $V_2 J H$ we find
\begin{equation}
\label{eq4}
\operatorname{tanh} h_J=\tan \biggl(\frac{\alpha_4}{2}\biggr) \sinh(a-x) =
\tan \biggl(\frac{\alpha_3}{2} \biggr) \sinh x,
\end{equation}
which implies that
$$
\sinh x=\frac{\operatorname{tanh} h_J}{\tan ({\alpha_3}/{2})} < \frac{A_0}{\tan ({\alpha_3}/{2})},
\qquad \sinh (a-x) < \frac{A_0}{\tan ({\alpha_4}/{2})}.
$$
Hence,
$$
a=x+(a-x) <\operatorname{arcsinh} \biggl(\frac{A_0}{\tan ({\alpha_3}/{2})}\biggr)
+ \operatorname{arcsinh} \biggl(\frac{A_0}{\tan ({\alpha_4}/{2})}\biggr),
$$
which completes the proof.
 \end{proof}

\subsection{The proof of Theorem~\ref{t1.1} and bounds on  $|(u_3,
u_4)|$}
\label{ss2.4}
Let a polyhedron $P$ be the fundamental polyhedron of a
$\mathbb{Q}$-arithmetic cocompact reflection group in the three-dimensional
Lobachevsky space and let $E$ be the outermost edge of the polyhedron
$P$. Consider the set of unit outer normals  $(u'_1, u'_2, u'_3,
u'_4)$ to the faces $F_1$, $F_2$, $F_3$, $F_4$. Note that this vector system is
linearly independent. Its Gram matrix is
$$
G(u'_1, u'_2, u'_3, u'_4) =
\begin{pmatrix}
1 & -\cos \alpha_{12} & -\cos \alpha_{13} & -\cos \alpha_{14}\\
-\cos \alpha_{12} & 1 & -\cos \alpha_{23} & -\cos \alpha_{24}\\
-\cos \alpha_{13} & -\cos \alpha_{23} & 1 & -T\\
-\cos \alpha_{14} & -\cos \alpha_{24} & -T & 1
\end{pmatrix},
$$
where $T=|(u'_3, u'_4)|=\cosh \rho(F_3, F_4)$ in the case where the faces $F_3$ and
$F_4$ diverge. Recall that otherwise $T \leq 1$, and we do not need to consider this case separately.

Let $(u^*_1, u^*_2, u^*_3, u^*_4)$ be the basis dual to the basis $(u'_1,
u'_2, u'_3, u'_4)$. Then $u^*_3$ and $u^*_4$ determine the vertices $V_2$ and $V_1$ in the Lobachevsky space.
Indeed, the vector $v_1$ corresponding to the point
$V_1 \in \mathbb{L}^3$ is uniquely determined (up to scaling)  by the conditions
$(v_1, u'_1)=(v_1, u'_2)=(v_1, u'_3)=0$. Note that
the vector $u_4^*$ satisfies the same conditions. Therefore, the vectors $v_1$ and
$u^*_4$ are proportional, hence,
$$
\cosh a=\cosh \rho(V_1, V_2)=-(v_1, v_2)=-\frac{(u_3^*,u_4^*)}
{\sqrt{(u_3^*,u_3^*)(u_4^*,u_4^*)}}.
$$

It is known that $G(u^*_1, u^*_2, u^*_3, u^*_4)=G(u'_1, u'_2, u'_3, u'_4)^{-1}$,
whence it follows that $\cosh a$ can be expressed in terms of the algebraic complements
$G_{ij}$ of the elements of the matrix $G=G(u'_1, u'_2, u'_3, u'_4)$:
$$
\cosh a=-\frac{(u_3^*,u_4^*)}{\sqrt{(u_3^*,u_3^*)(u_4^*,u_4^*)}} =
\frac{G_{34}}{\sqrt{G_{33} G_{44}}}.
$$

Denote the right-hand side of the inequality from Theorem~\ref{t2.2} by $F(\overline
\alpha)$, where $\overline \alpha=(\alpha_{12}, \alpha_{13}, \alpha_{23},
\alpha_{14}, \alpha_{24})$; then this theorem implies that $\cosh
a < \cosh F(\overline \alpha)$. It follows that
\begin{equation}
\label{eq5}
\frac{G_{34}}{\sqrt{G_{33} G_{44}}} < \cosh F(\overline \alpha).
\end{equation}
For every $\overline \alpha$, in this way we obtain  a linear inequality
with respect to the number $T$.

\begin{lemma}
\label{l2.1}
The following relations are true:

{\rm(i)}~$\alpha_{12}+\alpha_{23}+\alpha_{13} > \pi,
\quad \alpha_{12}+\alpha_{24}+\alpha_{14} > \pi$;

{\rm(ii)}
\begin{alignat*}{2}
\cos \alpha_1&=\frac{\cos \alpha_{23}+\cos \alpha_{12} \cdot \cos \alpha_{13}}
{\sin \alpha_{12} \cdot \sin \alpha_{13}}, &\qquad
\cos \alpha_2&=\frac{\cos \alpha_{24}+\cos \alpha_{12} \cdot \cos \alpha_{14}}
{\sin \alpha_{12} \cdot \sin \alpha_{14}},
\\
\cos \alpha_3&=\frac{\cos \alpha_{13}+\cos \alpha_{12} \cdot \cos \alpha_{23}}
{\sin \alpha_{12} \cdot \sin \alpha_{23}}, &\qquad
\cos \alpha_4&=\frac{\cos \alpha_{14}+\cos \alpha_{12} \cdot \cos \alpha_{24}}
{\sin \alpha_{12} \cdot \sin \alpha_{24}}.
\end{alignat*}
\end{lemma}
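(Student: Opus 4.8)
Both parts of Lemma~\ref{l2.1} are purely local facts about the vertex figure of the edge $E$, so the strategy is to analyze the configuration of the four faces $F_1,F_2,F_3,F_4$ at the two vertices $V_1,V_2$ separately.

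\medskip

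\textbf{Part (i).} Fix the vertex $V_1$. The three faces $F_1$, $F_2$, $F_3$ all pass through $V_1$ (the first two contain the edge $E$, and $F_3$ was defined as a face through a vertex of $E$ — here $V_1$ — not containing $E$). In a compact acute-angled polyhedron the link of a vertex is a spherical polygon, and since $P$ is bounded this link is a genuine (nondegenerate) triangle on the sphere: its three sides are arcs cut out by $F_1,F_2,F_3$ and its three angles are the dihedral angles $\alpha_{12},\alpha_{13},\alpha_{23}$. So the plan is simply to invoke the classical fact that the angle sum of a nondegenerate spherical triangle strictly exceeds $\pi$, which gives $\alpha_{12}+\alpha_{13}+\alpha_{23}>\pi$. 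Applying the same argument at $V_2$, where the three faces through that vertex are $F_1$, $F_2$, $F_4$ with dihedral angles $\alpha_{12},\alpha_{14},\alpha_{24}$, yields the second inequality. The only thing to check carefully is that these really are three distinct, mutually nontangent faces forming an honest spherical triangle (not a degenerate one), which follows from $P$ being a bounded acute-angled polyhedron with $E$ a genuine edge and $V_1,V_2$ genuine vertices.

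\medskip

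\textbf{Part (ii).} This is the spherical law of cosines applied to the same two vertex triangles. For the triangle at $V_1$ with angles $\alpha_{12},\alpha_{13},\alpha_{23}$ (at the vertices dual to $F_3,F_2,F_1$ respectively — the angle at a given spherical vertex equals the dihedral angle along the opposite edge of $E$'s star), the plane angle $\alpha_1$ is a \emph{side} of this triangle, namely the side of the spherical triangle opposite... here one must pin down the bookkeeping: $\alpha_1$ is the angle at $V_1$ between $E$ and the edge $E_1=F_1\cap F_3$, hence it is the spherical-triangle side joining the vertex dual to $F_2$ (the direction of $E$ inside the link is determined by $F_1\cap F_2$, i.e. cutting against $F_1$ and $F_2$) — in any case $\alpha_1,\alpha_2$ are the two sides of the $V_1$-triangle adjacent to the "$E$-vertex'', and $\alpha_3,\alpha_4$ play the symmetric role at $V_2$. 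Concretely: in terms of the unit normals, $\cos\alpha_1$ is the normalized inner product of the two unit vectors pointing from $V_1$ along $E$ and along $E_1$, and a direct computation with the Gram matrix $G(u'_1,u'_2,u'_3)$ and its inverse — equivalently, the dual-basis computation already used just before the lemma — produces exactly
$$
\cos\alpha_1=\frac{\cos\alpha_{23}+\cos\alpha_{12}\cos\alpha_{13}}{\sin\alpha_{12}\sin\alpha_{13}},
$$
which is the spherical law of cosines for the angle of a spherical triangle in terms of its three angles (dual form). The remaining three formulas are obtained by the symmetries $3\leftrightarrow$ (interchange roles within the $V_1$-triangle) and by passing to the $V_2$-triangle $\{F_1,F_2,F_4\}$, i.e. replacing the index $3$ by $4$ throughout.

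\medskip

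\textbf{Main obstacle.} The genuine computational content is trivial (it's just the spherical cosine rule), so the only real care needed is the combinatorial/geometric identification: verifying that the relevant triples of faces do meet at $V_1$ and $V_2$ and cut out nondegenerate spherical triangles, and correctly matching each plane angle $\alpha_j$ and each dihedral angle $\alpha_{ij}$ to the right side/angle of the right triangle. I expect that bookkeeping — not any inequality or identity — to be the part most prone to error, and I would handle it by writing down the three concrete unit vectors at each vertex (directions of $E$, $E_1$, $E_3$ out of $V_1$) in coordinates adapted to $G(u'_1,u'_2,u'_3)$ and reading off the inner products directly.
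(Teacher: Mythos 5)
Your proposal is correct and follows essentially the same route as the paper: intersecting the trihedral angles at $V_1$ and $V_2$ with small spheres to get spherical triangles whose angles are the dihedral angles $\alpha_{ij}$ and whose sides are the plane angles $\alpha_k$, then using the angle-sum inequality for (i) and the dual spherical cosine rule for (ii). The bookkeeping you flag as the delicate point is exactly what the paper's one-line proof relies on, and your identification of sides and angles yields the stated formulas.
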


\begin{proof}
To prove both parts of the lemma, we intersect each trihedral angle
with the vertices $V_1$ and $V_2$ by spheres centered at these points.
In the intersection we obtain spherical triangles the angles  of
which are the dihedral angles $\alpha_{ij}$, and the lengths of their edges
are the flat angles $\alpha_k$. This  implies at once the first assertion, and
the second one follows from the dual cosine-theorem for these triangles (see, for
example, \cite[p.\,71]{AVS88}). The lemma is proved.
\end{proof}

\begin{figure}[!htp]
\begin{center}
\includegraphics[scale=1.2]{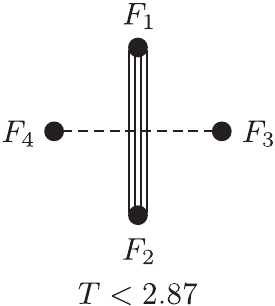}
\caption{Coxeter diagram of the edge with angles
$(\pi/6, \pi/2, \pi/2, \pi/2, \pi/2)$}
\label{fig4}
\end{center}
\end{figure}

It is known that the dihedral angles of the fundamental polyhedron of an arithmetic
hyperbolic reflection group with a ground field $\mathbb{Q}$ can equal only
$\pi/2$, $\pi/3$, $\pi/4$, and $\pi/6$.

It is easy to verify that, taking into account Lemma~\ref{l2.1},\,(i), there are exactly
$44$ different (up to numbering) sets of angles $\overline \alpha$.
For each such set  $\overline \alpha$ inequality~\eqref{eq5} gives some
bound $T < t_{\overline \alpha}$.

\begin{table}
\begin{center}
\caption{Coxeter diagrams of the outermost edge}
\vskip1mm
\includegraphics[scale=1.2]{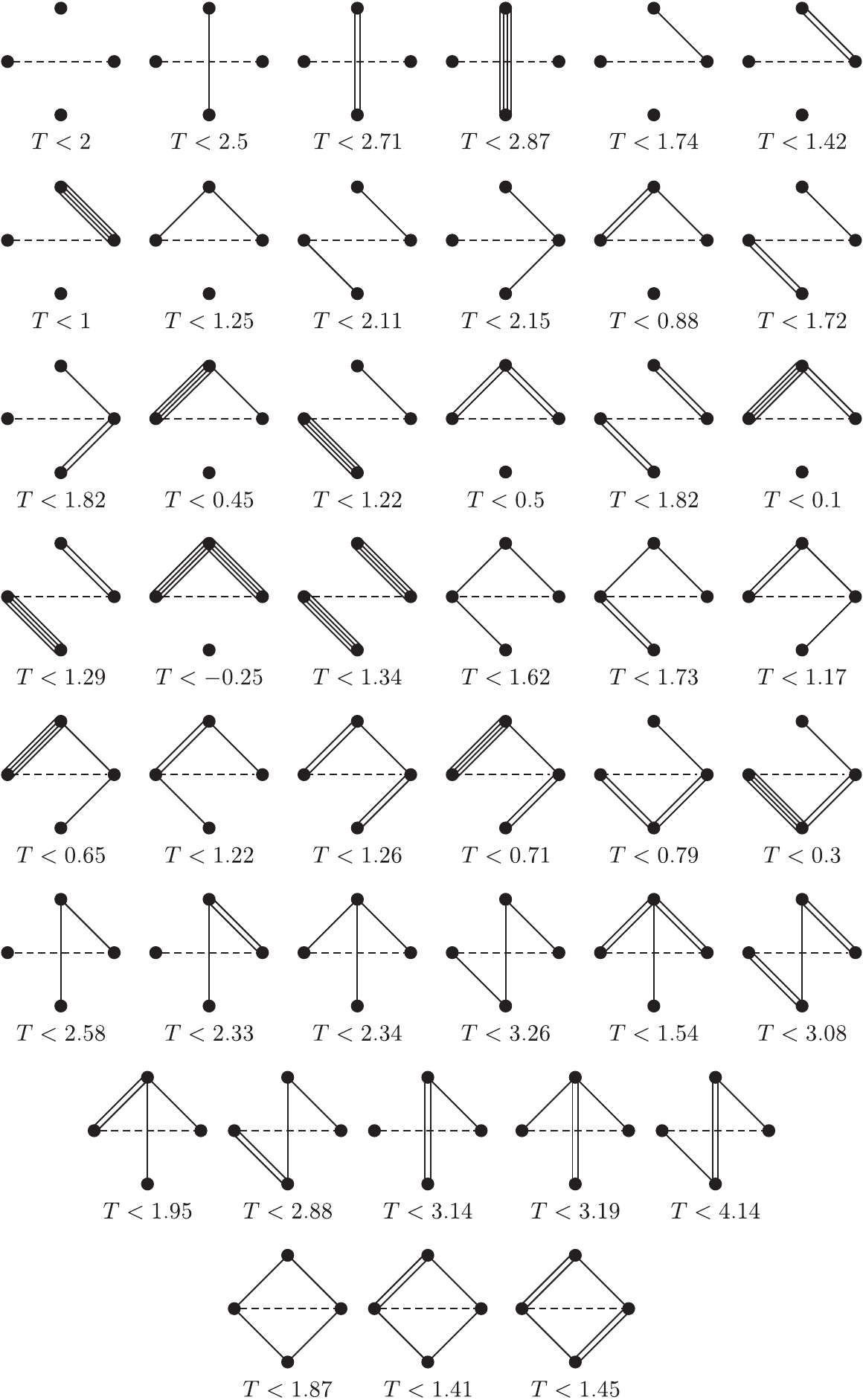}
\label{tab}
\end{center}
\end{table}

 For solving the $44$ linear inequalities a program was compiled
 in the computer algebra system Sage\footnote{
The Sage Developers, \textit{the Sage Mathematics Software System
(Version~7.6)}, SageMath, \href{http://www.sagemath.org}{http:/\!/www.sagemath.org}, 2017},
its code is available on the Internet\footnote{
N.~Bogachev, \textit{Method of the outermost edge/bounds},
\href{https://github.com/nvbogachev/OuterMostEdge/blob/master/bounds.sage/}
{https:/\!/github.com/nvbogachev/\allowbreak OuterMostEdge\allowbreak
/blob/master/bounds.sage/}, 2017}.

The obtained results are presented in Table~\ref{tab} in the form of a set of
Coxeter diagrams for the faces  $F_1$, $F_2$, $F_3$, $F_4$. The faces $F_3$ and $F_4$ will be
connected by a dotted line, and the whole diagram will be signed by the relevant
bound: $T < t_{\overline \alpha}$.

An example of how an edge diagram looks like for~$\overline \alpha=(\pi/6, \pi/2,
\pi/2, \pi/2, \pi/2)$ see in Figure~\ref{fig4}. In this figure we see that
$t_{(\pi/6, \pi/2, \pi/2, \pi/2, \pi/2)}=2.87$.

Thus, we have proved the following theorem.

\begin{theorem}
\label{t2.3}
The fundamental polyhedron of every $\mathbb{Q}$-arithmetic cocompact
 reflection group in~$\mathbb{L}^3$ contains an edge of width less than $t_{\overline
\alpha}$, where $t_{\overline \alpha}$ is the number (depending on the set $\overline
\alpha$), specified in Table~\ref{tab}. Moreover,
$$
\max_{\overline \alpha} \{t_{\overline \alpha}\}=t_{(\pi/4, \pi/2, \pi/3,
\pi/3, \pi/2)}= 4.14.
$$
\end{theorem}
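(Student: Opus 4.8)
The plan is to establish Theorem~\ref{t2.3} as a direct consequence of Theorem~\ref{t2.2} together with the arithmetic constraint that the dihedral angles of a $\mathbb{Q}$-arithmetic cocompact reflection group in $\mathbb{L}^3$ belong to the finite set $\{\pi/2,\pi/3,\pi/4,\pi/6\}$. First I would recall that, by Proposition~\ref{p2.1}, the outermost edge $E$ from a fixed interior point $O$ exists and its framing faces $F_3$, $F_4$ (in the case where they diverge, which is the only case requiring work) satisfy $T=\cosh\rho(F_3,F_4)=\tfrac{G_{34}}{\sqrt{G_{33}G_{44}}}$, where $G=G(u'_1,u'_2,u'_3,u'_4)$ is the Gram matrix of the unit outer normals written out in \S\ref{ss2.4}. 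The entries of $G$ above the diagonal are $-\cos\alpha_{12},-\cos\alpha_{13},-\cos\alpha_{23},-\cos\alpha_{14},-\cos\alpha_{24}$ and the single unknown $-T$; expanding the cofactors $G_{33},G_{34},G_{44}$ shows that each is an affine (degree one) polynomial in $T$ with coefficients that are explicit trigonometric expressions in $\overline\alpha$. Hence inequality~\eqref{eq5}, namely $\tfrac{G_{34}}{\sqrt{G_{33}G_{44}}}<\cosh F(\overline\alpha)$, rearranges into a linear inequality $T<t_{\overline\alpha}$ once $\overline\alpha$ is fixed; here $F(\overline\alpha)$ is the right-hand side of Theorem~\ref{t2.2}, and the plane angles $\alpha_3,\alpha_4$ entering it are computed from the $\alpha_{ij}$ via the dual spherical cosine law of Lemma~\ref{l2.1},(ii).

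The next step is the enumeration. Using Lemma~\ref{l2.1},(i) — the two triangle-type inequalities $\alpha_{12}+\alpha_{13}+\alpha_{23}>\pi$ and $\alpha_{12}+\alpha_{14}+\alpha_{24}>\pi$, together with the arithmeticity restriction on the angle values and the obvious symmetry that lets us relabel $F_3\leftrightarrow F_4$ (equivalently swap $(\alpha_{13},\alpha_{23})$ with $(\alpha_{14},\alpha_{24})$) — one checks that exactly $44$ admissible tuples $\overline\alpha$ survive. For each of these I would substitute the numeric angle values into Lemma~\ref{l2.1},(ii) to obtain $\cos\alpha_3,\cos\alpha_4$, hence $\tan(\alpha_3/2),\tan(\alpha_4/2)$ and then $F(\overline\alpha)$ and $\cosh F(\overline\alpha)$; substitute the same values into the cofactors $G_{33}(T),G_{34}(T),G_{44}(T)$; and solve the resulting one-variable inequality for $T$, recording the bound $t_{\overline\alpha}$. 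This is a finite, mechanical computation, and it is precisely what the cited Sage script carries out; in the writeup I would simply assert that the $44$ outputs are collected in Table~\ref{tab} and refer to the code for verification.

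Finally, to obtain the stated maximum I would compare the $44$ numbers $t_{\overline\alpha}$ and observe that the largest is attained at $\overline\alpha=(\pi/4,\pi/2,\pi/3,\pi/3,\pi/2)$, with value $4.14$ (rounded up); this yields both the clean statement of Theorem~\ref{t1.1} (width $<4.14$) and the refined Theorem~\ref{t2.3}. The main obstacle — really the only nontrivial point — is ensuring that the enumeration of the $44$ tuples is genuinely exhaustive and that no admissible configuration has been missed or double-counted: one must be careful that Lemma~\ref{l2.1},(i) is the \emph{only} constraint beyond the allowed angle values (in particular that there is no hidden further restriction forcing, say, a spherical triangle to be non-degenerate in a way that kills more cases), and that the relabeling symmetry is applied consistently so that genuinely distinct diagrams are not conflated. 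A secondary technical care point is checking, for each $\overline\alpha$, that the denominator $\sqrt{G_{33}G_{44}}$ is positive on the relevant range of $T$ (so that clearing it preserves the inequality direction), which follows from the linear independence of $u'_1,u'_2,u'_3,u'_4$ noted in \S\ref{ss2.4} but should be verified case by case. Once these bookkeeping matters are settled, the theorem follows.
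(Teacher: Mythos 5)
Your proposal follows essentially the same route as the paper: it derives the linear inequality in $T$ from $\cosh a = G_{34}/\sqrt{G_{33}G_{44}}$ combined with Theorem~\ref{t2.2}, computes the plane angles via Lemma~\ref{l2.1},(ii), enumerates the $44$ admissible angle sets using Lemma~\ref{l2.1},(i) and the arithmetic restriction on dihedral angles, and delegates the finite computation to the Sage script to obtain Table~\ref{tab} and the maximum $4.14$. This matches the paper's argument in \S\,\ref{ss2.4}, with your additional remarks on exhaustiveness and the positivity of $\sqrt{G_{33}G_{44}}$ being sensible bookkeeping rather than a deviation.
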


The numbers given in Table~\ref{tab} were calculated on a computer with accuracy
up to eight decimal places. The table shows them rounded up to the nearest hundredth, which is 
quite enough for our purposes. \textit{Note: the numbering of the faces on each diagram is
the same as in the Figure~\ref{fig2}.}

Let $E$ be the edge (chosen in Theorem~\ref{t2.3}) of width less than  $t_{\overline
\alpha}$ for some~$\overline \alpha$ in the fundamental polyhedron $P$ of the group
 $\mathcal{O}_r (L)$, and let $(u_3, u_3)=k$, $(u_4, u_4)=l$. Then
\begin{equation}
\label{eq6}
|(u_3, u_4)| < t_{\overline \alpha} \cdot \sqrt{kl}.
\end{equation}
Note that this bound is much better than estimate~\eqref{eq2}.

\section{Quadratic lattices}
\label{s3}

In this section we give some necessary information about
indefinite quadratic lattices. For more details, see, for example,~\cite{Kas78} or \cite{Vin84}.

Let $A$ be a principal ideal ring. A \textit{quadratic $A$-module} is a
free $A$-module of finite rank equipped with a non-degenerate
 symmetric bilinear form
with values in $A$, called an \textit{inner product}. In particular,
a quadratic $\mathbb{Z}$-module is called
a \textit{quadratic lattice}.
We denote by $[C]$ the standard module $A^n$ whose
inner product is defined by a Gram matrix $C$.
\textit{A quadratic space} is a free $\mathbb{F}$-module of finite rank over $\mathbb{F}$.

The determinant of the Gram matrix of a basis of a module $L$  is called
a \textit{discriminant} $d (L)$ of the quadratic $A$-module $L$.
It is defined up to multiplication by an element of $(A^*)^2 $ ($A^*$
denotes the group of invertible elements of the ring $A$) and can be regarded as
an element of the semigroup $A /(A^*)^2$.
A quadratic $A$-module $L$ is called \textit{unimodular} if $d(L) \in A^*$.

A nonzero vector $x \in L $ is called \textit{isotropic} if $(x, x) = 0$.
A quadratic
module $L$ is called \textit{isotropic} if it contains at least one
isotropic vector,
otherwise $L$ is called \textit{anisotropic}.

Since $(\mathbb{Z}^*)^2 = {1}$, the discriminant $d (L)$ of a
quadratic lattice $L$
is  an integer number. The unimodularity of a quadratic lattice $L$ is
equivalent to the property that $L$ coincides with its \textit{conjugate} lattice
$$
L^* = \{x \in L \otimes \mathbb{Q} \mid (x, y)
\in \mathbb {Z} \ \  \forall y \in L \}.
$$

For a lattice $L$, the invariant factors of the Gram matrix of a basis of $L$
are called \textit {invariant factors} of the lattice $L$.
Every quadratic lattice $L$ generates the quadratic real vector
space
$L_{\infty} = L \otimes \mathbb{R}$ and, for any prime~$p$, the quadratic $\mathbb{O}_p$-module
$L_p = L\,{\otimes}\,\mathbb{O}_p$, where $\mathbb {O}_p$ is the ring
of $p$-adic numbers.
The signature of
 the lattice $ L $ is defined as the signature of the space $ L_{\infty}$.
 It is obvious that if two quadratic
lattices $L$ and $M$ are isomorphic, then
they have the same signature and $ L_p \simeq M_p $ for any prime $p$.
The converse is also true under the following conditions:

(i) $L$ is indefinite;

(ii) for any prime $p$, the lattice $L$ has two invariant factors
divisible by  the same power of $p$.

The structure of quadratic $\mathbb{O}_p$-modules
can be described as follows.
Each such module
$L_p$ admits the \textit{Jordan decomposition}
$$
L_p = L_p^{(0)} \oplus [p] L_p^{(1)} \oplus [p^2] L_p^{(2)} \oplus \cdots,
$$
where all $L_p^{(j)}$ are unimodular quadratic $\mathbb{O}_p$-modules.
These unimodular modules are determined by $L$ uniquely up to an isomorphism, unless
$p \not = 2$. In the case $p = 2$ the rank and the parity of each such module are uniquely
determined by $L$.

\begin{proposition}[{\rm(See, for example, \cite{Vin84})}]
\label{p3.1}
If $L$ is a maximal quadratic lattice that is not contained
in any other quadratic lattice, then
$$
L_p=L_p^{(0)} \oplus [p] L_p^{(1)}
$$
for all primes $p\mid d(L)$.
\end{proposition}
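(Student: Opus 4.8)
The plan is to prove the contrapositive. Suppose $L$ is maximal but that, for some prime $p \mid d(L)$, the Jordan decomposition of $L_p$ has a nonzero summand $[p^j]L_p^{(j)}$ with $j \geq 2$; I will exhibit a quadratic lattice strictly containing $L$, a contradiction. (For the primes $p \nmid d(L)$ the module $L_p$ is unimodular, so its Jordan decomposition is already $L_p = L_p^{(0)}$ and there is nothing to prove; thus it suffices to rule out levels $\geq 2$ at the primes dividing $d(L)$.)

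First I would choose a primitive vector $\bar e$ of the unimodular $\mathbb{O}_p$-module $L_p^{(j)}$ — for instance a basis vector of $L_p^{(j)}$ — and view it inside the summand $[p^j]L_p^{(j)} \subseteq L_p$. The point is that passing to the level-$j$ block scales the unimodular inner product by $p^j$, leaving $\bar e$ a great deal of divisibility to spare: since $\bar e$ is orthogonal to every other Jordan summand, one gets $(\bar e, \bar e) \in p^j\mathbb{O}_p \subseteq p^2\mathbb{O}_p$ and $(\bar e, z) \in p^j\mathbb{O}_p \subseteq p\mathbb{O}_p$ for all $z \in L_p$. Next I would transport $\bar e$ back to $L$ itself: the natural map $L \to L_p/p^2L_p$ is surjective (its image is all of $L/p^2L = L_p/p^2L_p$), so there is $e \in L$ with $e \equiv \bar e \pmod{p^2L_p}$. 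Writing $e = \bar e + p^2 w$ with $w \in L_p$ and expanding, the congruences above together with $j \geq 2$ give $(e, e) \in p^2\mathbb{O}_p$ and $(e, x) \in p\mathbb{O}_p$ for every $x \in L$; since $(e,e)$ and $(e,x)$ are rational integers, this forces $(e,e) \in p^2\mathbb{Z}$ and $(e,x) \in p\mathbb{Z}$. Also $e \equiv \bar e \pmod{pL_p}$ and $\bar e$ is primitive in $L_p$, so $e$ is primitive in $L_p$, hence primitive in $L$; in particular $\tfrac1p e \notin L$.

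Finally, $M := L + \mathbb{Z}\cdot\tfrac1p e$ is then a lattice in $L \otimes \mathbb{Q}$ properly containing $L$, and its inner product — non-degenerate since $M \otimes \mathbb{Q} = L \otimes \mathbb{Q}$ — takes integral values, because $(\tfrac1p e, \tfrac1p e) = (e,e)/p^2 \in \mathbb{Z}$ and $(\tfrac1p e, x) = (e,x)/p \in \mathbb{Z}$ for all $x \in L$. This contradicts the maximality of $L$ and proves the proposition. I expect the one place requiring care to be the descent from the local module $L_p$ back to the lattice $L$ — the approximation $e \equiv \bar e \pmod{p^2L_p}$, and the observation that an inner product which is both $p$-adically divisible by $p^k$ \emph{and} a rational integer is genuinely divisible by $p^k$ in $\mathbb{Z}$ — since everything else is forced by the single remark that every inner product involving a Jordan block of level $j \geq 2$ is divisible by $p^j$, exactly the divisibility needed to keep $\tfrac1p e$ admissible.
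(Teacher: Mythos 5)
Your argument is correct, and since the paper does not prove Proposition~\ref{p3.1} at all (it simply refers to \cite{Vin84}), your write-up in fact supplies the standard proof: a Jordan block of level $j\geq 2$ yields, after approximating a primitive basis vector of that block modulo $p^2L_p$, a vector $e\in L$ with $(e,e)\in p^2\mathbb{Z}$ and $(e,L)\subseteq p\mathbb{Z}$, so that $L+\mathbb{Z}\cdot\tfrac1p e$ is a proper integral overlattice contradicting maximality. The delicate points you flag (surjectivity of $L\to L_p/p^2L_p$, primitivity of $e$ so that $\tfrac1p e\notin L$, and the intersection $p^k\mathbb{O}_p\cap\mathbb{Z}=p^k\mathbb{Z}$) are all handled correctly, so nothing further is needed.
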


\begin{definition}
\label{d3.1}
Let $a, b \in \mathbb{Q}^*_p$. Set
$$
(a, b)_p =
\begin{cases}
1 & \text{if the equation } ax^2 + by^2 = 1 \text{ has a solution in }\mathbb{Q}^*_p,
\\
-1 & \text{otherwise.}
\end{cases}
$$
The number $(a, b)_p $ is called the \textit{Hilbert symbol}.
\end{definition}

It is known that the group $ \mathbb{Q}^*_ p / (\mathbb{Q}^*_ p)^2$ can be
regarded as a vector space over $\mathbb{Z}_2 = \mathbb{Z} / 2 \mathbb{Z}$
of rank $2$  for $p \not = 2$
(respectively, of rank $3$ for $p = 2$).
The Hilbert symbol is a non-degenerate symmetric bilinear
form on this vector space.

Recall the definition of a Hasse invariant for an arbitrary quadratic
space $V$ over the field $\mathbb{Q}_p$. Let $a_1, \dots, a_m$ be the squares of lengths
of vectors of some orthogonal basis of the space $V$.

\begin{definition}
The number
$$
\varepsilon_p (f) = \prod_{i <j} (a_i, a_j)_p
$$
is called the Hasse invariant of the quadratic space $V$.
\end{definition}

The following assertions are known.

\begin{theorem}[{\rm See, for example, \cite[Lemma~2.6]{Kas78}}]
\label{t3.1}
A quadratic space $V$ of rank $4$ over the field $\mathbb{Q}_p$
is anisotropic if and only if the following conditions hold:

{\rm(1)}~$d(V) \in (\mathbb{Q}^*_p)^2$;

{\rm(2)}~$\varepsilon_p (V)=-(-1, -1)_p$.
\end{theorem}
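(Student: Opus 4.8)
The plan is to reduce the criterion to the behaviour of \emph{binary} forms over $\Q_p$: write $V$ as an orthogonal sum of two planes and use that an anisotropic binary form over $\Q_p$ represents exactly one coset of an index-$2$ subgroup of $\Q_p^*$. Only elementary facts about the Hilbert symbol (its bilinearity and its non-degeneracy on $\Q_p^*/(\Q_p^*)^2$, both recalled above) and the description of the value set of a binary form will be needed; in particular the full classification of quadratic spaces over $\Q_p$ is not required.

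First I would fix a diagonalization $V=[\mathrm{diag}(a_1,a_2,a_3,a_4)]$ over $\Q_p$ and split $V=U\oplus W$ with $U=[\mathrm{diag}(a_1,a_2)]$ and $W=[\mathrm{diag}(a_3,a_4)]$, so that $d(V)=a_1a_2a_3a_4$ and $\varepsilon_p(V)=\prod_{i<j}(a_i,a_j)_p$. If $U$ or $W$ is isotropic then so is $V$; hence assume both are anisotropic, i.e.\ $-a_1a_2$ and $-a_3a_4$ are non-squares. Then $U\cong a_1\langle 1,-(-a_1a_2)\rangle$ represents precisely the coset $a_1N_1$, and $W$ precisely $a_3N_2$, where $N_i=\{c\in\Q_p^*\mid (c,-a_1a_2)_p=1\}$, resp.\ $\{c\mid (c,-a_3a_4)_p=1\}$, is the group of norms from $\Q_p(\sqrt{-a_1a_2})$, resp.\ $\Q_p(\sqrt{-a_3a_4})$; by non-degeneracy of the Hilbert symbol each $N_i$ has index $2$ in $\Q_p^*$. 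Since $V$ is isotropic exactly when $U$ and $W$ represent opposite non-zero values, $V$ is isotropic $\iff a_1N_1\cap(-a_3)N_2\neq\varnothing \iff -a_1a_3\in N_1N_2$.

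If $d(V)\notin(\Q_p^*)^2$, then $(-a_1a_2)(-a_3a_4)=d(V)$ is a non-square, so the extensions $\Q_p(\sqrt{-a_1a_2})$ and $\Q_p(\sqrt{-a_3a_4})$ are distinct and $N_1\neq N_2$; as $\Q_p^*/(\Q_p^*)^2$ is an elementary abelian $2$-group, two distinct subgroups of index $2$ already generate it, so $N_1N_2=\Q_p^*$ and $V$ is isotropic. Thus anisotropy of $V$ forces condition~(1). If $d(V)\in(\Q_p^*)^2$, then $-a_1a_2\equiv-a_3a_4$ modulo squares, so $N_1=N_2=:N$ and $V$ is anisotropic $\iff -a_1a_3\notin N \iff (-a_1a_3,\,-a_1a_2)_p=-1$. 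A short computation with the bilinearity of the Hilbert symbol, using the relations $a_3a_4\equiv a_1a_2$ and $a_2a_3a_4\equiv a_1$ (consequences of $d(V)\equiv1$), rewrites $(-a_1a_3,\,-a_1a_2)_p$ as $(-1,-1)_p\,\varepsilon_p(V)$; hence $V$ is anisotropic $\iff (-1,-1)_p\,\varepsilon_p(V)=-1 \iff \varepsilon_p(V)=-(-1,-1)_p$. Combining the two cases yields the theorem.

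The argument is routine and uniform in $p$ (the special behaviour of $p=2$ affects only the rank of $\Q_p^*/(\Q_p^*)^2$, which is irrelevant here); the one step requiring care is the Hilbert-symbol identity $(-a_1a_3,-a_1a_2)_p=(-1,-1)_p\,\varepsilon_p(V)$ in the square-discriminant case. Two equivalent shortcuts are worth noting: (a) in the square-discriminant case one may instead observe that an isotropic rank-$4$ space splits off two hyperbolic planes and hence equals $[\mathrm{diag}(1,-1,1,-1)]$, whose Hasse invariant is $(-1,-1)_p$, so by the classification of quadratic spaces over $\Q_p$ (\cite{Kas78}) anisotropy is equivalent to $\varepsilon_p(V)\neq(-1,-1)_p$; (b) one may exhibit the unique anisotropic rank-$4$ space as the reduced norm form $[\mathrm{diag}(1,-a,-b,ab)]$ of the division quaternion algebra $\left(\tfrac{a,b}{\Q_p}\right)$ (so $(a,b)_p=-1$) and compute directly that its discriminant is $(ab)^2\equiv1$ and its Hasse invariant is $-(-1,-1)_p$, the criterion then following from the completeness of the invariants together with the isotropy of every form of rank $\geq5$ over $\Q_p$.
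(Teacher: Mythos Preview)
The paper does not prove Theorem~\ref{t3.1} at all: it merely states the result and refers to Cassels~\cite[Lemma~2.6]{Kas78} for a proof. There is therefore no ``paper's own proof'' to compare against.

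Your argument is correct and self-contained. The decomposition into two binary pieces, the identification of the value set of an anisotropic binary form with a coset of the norm group, and the dichotomy on whether $N_1=N_2$ all go through as stated. The key Hilbert-symbol identity in the square-discriminant case does hold: expanding $(-a_1a_3,-a_1a_2)_p$ bilinearly and using $(a_1,a_1)_p=(a_1,-1)_p$ gives
\[
(-a_1a_3,-a_1a_2)_p=(-1,-1)_p\,(-1,a_1a_2a_3)_p\,(a_1,a_2)_p(a_1,a_3)_p(a_2,a_3)_p,
\]
and since $a_1a_2a_3\equiv a_4\bmod{(\Q_p^*)^2}$ one has $(-1,a_1a_2a_3)_p=(-1,a_4)_p=(a_4,a_4)_p=(a_1a_2a_3,a_4)_p$, which supplies exactly the three missing factors $(a_1,a_4)_p(a_2,a_4)_p(a_3,a_4)_p$ of $\varepsilon_p(V)$. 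Your shortcuts~(a) and~(b) are also valid, though~(a) implicitly uses the full classification (completeness of the invariants $d$ and $\varepsilon_p$), which the main argument manages to avoid.
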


\begin{theorem}[{\rm Strong Hasse principle, see, for example,
\cite[Theorem~1.1]{Kas78}}]
\label{t3.2}
A quadratic space $V$ over the field $\mathbb{Q}$ is isotropic if and only if
$V \otimes \mathbb{Q}_p$ is isotropic for all $p$, including $\infty$.
\end{theorem}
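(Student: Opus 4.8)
The forward implication is immediate, since a nonzero rational zero of $V$ is a fortiori a nonzero zero over each completion $\mathbb{Q}_p$ and over $\mathbb{R} = \mathbb{Q}_\infty$. The whole content lies in the converse, and the plan is to prove it by induction on the rank $n = \dim_{\mathbb{Q}} V$ after putting the form in normal form. Diagonalizing, I write $V$ as $f = a_1 x_1^2 + \cdots + a_n x_n^2$, and after rescaling each variable I may assume every $a_i$ is a nonzero squarefree integer. For $n = 1$ the form is anisotropic over every field, so there is nothing to prove. For $n = 2$ the form $a_1 x_1^2 + a_2 x_2^2$ is isotropic over a field exactly when $-a_1 a_2$ is a square there; everywhere-local isotropy therefore says $-a_1 a_2$ is a square in $\mathbb{R}$ and in every $\mathbb{Q}_p$, and by unique factorization in $\mathbb{Z}$ this forces it to be a square in $\mathbb{Q}$, giving a rational zero.

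The decisive case is $n = 3$, which is Legendre's theorem. After clearing squares and common factors I may assume $a_1, a_2, a_3$ are squarefree, pairwise coprime, with squarefree product. The local conditions then unwind to the classical congruence conditions (each $-a_i a_j$ a square modulo the remaining coefficient) together with a sign condition at $\mathbb{R}$. The hard part here is genuinely global: assuming these conditions I would produce a nontrivial integral zero by a descent argument, using the Chinese remainder theorem to build a lattice on which the form takes small values and a geometry-of-numbers (or Davenport--Cassels style) reduction to shrink a hypothetical solution down to an actual zero, as in \cite{Kas78}. This is the only step that is not a formal reduction to smaller rank.

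For $n \geq 4$ the plan is a uniform splitting $f = g \perp h$ with $g = a_1 x_1^2 + a_2 x_2^2$ and $h = -(a_3 x_3^2 + \cdots + a_n x_n^2)$, together with the search for a single value $d \in \mathbb{Q}^*$ that is represented by $g$ over $\mathbb{Q}$ and represented by $h$ over every completion. Local isotropy of $f$ guarantees that at each place $v$ some $d_v \in \mathbb{Q}_v^*$ is commonly represented by $g$ and $h$; when $n \geq 5$ the form $h$ has rank $\geq 3$ and so is automatically isotropic over $\mathbb{Q}_p$ for all but finitely many $p$ (by Hensel's lemma together with a counting argument mod $p$ for $p \nmid 2 a_3 \cdots a_n$), which confines the compatibility conditions to finitely many places. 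I would then invoke Dirichlet's theorem on primes in arithmetic progressions to patch the finitely many local values $d_v$ into one global $d$ with the required representation properties. Applying the ternary case (when $n = 4$) or the induction hypothesis to the form $h - d\,w^2$ of rank $n - 1$ (when $n \geq 5$) yields a rational representation of $d$ by $h$; combined with the representation of $d$ by $g$, this produces a nontrivial rational zero of $f$ and closes the induction.

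The main obstacles are concentrated entirely in the arithmetic inputs rather than in the bookkeeping. The ternary case depends on Legendre's theorem, whose descent is the one substantive global ingredient, and the passage to $n \geq 4$ depends essentially on Dirichlet's theorem on primes in arithmetic progressions to manufacture a single rational value out of compatible local data. The consistency of the local conditions that makes this patching possible is ultimately guaranteed by the Hilbert-symbol product formula $\prod_v (a, b)_v = 1$, where the product runs over all places $v$ including $\infty$; this ensures that the finitely many local obstructions cannot be jointly incompatible and that no place is overlooked.
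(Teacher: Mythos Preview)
The paper does not prove this theorem at all: it is stated as a known result with a reference to \cite[Theorem~1.1]{Kas78} and no argument is given. So there is nothing in the paper to compare your proposal against.

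That said, your outline is the standard route to the Hasse--Minkowski theorem (essentially the one in the cited reference and in Serre's \emph{Cours d'arithm\'etique}): diagonalize, handle $n\le 2$ trivially, settle $n=3$ by Legendre's theorem, and for $n\ge 4$ split off a binary piece and use Dirichlet on primes in arithmetic progressions (together with the product formula for Hilbert symbols) to find a common represented value. As a sketch it is fine; the genuinely nontrivial inputs are exactly the ones you flag (Legendre's descent and Dirichlet's theorem), and a full proof would have to carry those out rather than invoke them. For the purposes of this paper, however, a citation is all that was intended.
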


\begin{theorem}[{\rm Weak Hasse principle, see, for example,
\cite[Theorem~1.2]{Kas78}}]
\label{t3.3}
Two rational quadratic spaces are isomorphic over $\mathbb{Q}$ if and only if
they are  isomorphic over $\mathbb{R}$ and over all $\mathbb{Q}_p$.
\end{theorem}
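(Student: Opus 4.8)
The forward implication is immediate: any $\mathbb{Q}$-isomorphism $V \to W$ extends by base change to isomorphisms $V \otimes \mathbb{R} \to W \otimes \mathbb{R}$ and $V \otimes \mathbb{Q}_p \to W \otimes \mathbb{Q}_p$ for every prime $p$. The whole content lies in the converse, and my plan is to deduce it from the strong Hasse principle (Theorem~\ref{t3.2}) by an induction on $n = \dim V = \dim W$.

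The crucial intermediate step is a local-global principle for represented values: for a nondegenerate quadratic form $f$ over $\mathbb{Q}$ and a fixed $a \in \mathbb{Q}^*$, the form $f$ represents $a$ over $\mathbb{Q}$ if and only if it represents $a$ over every completion $\mathbb{Q}_v$ (including $v = \infty$). To prove this I would pass to the form $g = f \perp \langle -a \rangle$ of dimension one higher and observe that $f$ represents $a$ precisely when $g$ is isotropic. One direction is clear from $g(x, 1) = f(x) - a$; for the other, an isotropic vector $(x, y)$ of $g$ with $y \neq 0$ gives $f(x/y) = a$, while $y = 0$ forces $f$ itself to be isotropic, and a nondegenerate isotropic form represents every nonzero scalar. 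Applying this equivalence simultaneously over $\mathbb{Q}$ and over each $\mathbb{Q}_v$, the representability claim becomes exactly the assertion that $g$ is isotropic over $\mathbb{Q}$ if and only if it is isotropic over all completions, which is Theorem~\ref{t3.2}.

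With the representability principle in hand, the induction runs as follows. The cases $n \leq 1$ are settled directly, since $\langle a \rangle \cong \langle b \rangle$ over a field means $b/a$ is a square there, i.e.\ $\langle a \rangle$ represents $b$, so the representability principle yields the $n = 1$ case at once. For the inductive step, assume the theorem for dimension $n - 1$ and let $V$, $W$ be locally isomorphic at every place. I would choose $a \in \mathbb{Q}^*$ represented by $V$; since $V \cong W$ over every $\mathbb{Q}_v$, the form $W$ represents $a$ locally everywhere, hence over $\mathbb{Q}$ by the representability principle. Splitting off this common value produces $\mathbb{Q}$-orthogonal decompositions $V \cong \langle a \rangle \perp V'$ and $W \cong \langle a \rangle \perp W'$ with $\dim V' = \dim W' = n - 1$.

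The final step is to descend the local isomorphisms to $V'$ and $W'$. Over each completion $\mathbb{Q}_v$ we have $\langle a \rangle \perp V' \cong \langle a \rangle \perp W'$, so Witt's cancellation theorem (valid over $\mathbb{R}$ and every $\mathbb{Q}_p$; see \cite{Kas78}) gives $V' \cong W'$ over $\mathbb{Q}_v$ for all $v$. By the induction hypothesis $V' \cong W'$ over $\mathbb{Q}$, and adjoining $\langle a \rangle$ back yields the desired $\mathbb{Q}$-isomorphism $V \cong W$. Since the strong Hasse principle is available as input, the only genuinely delicate point in my own argument is the isotropic case in the representability principle; everything else reduces cleanly to Theorem~\ref{t3.2} and to Witt cancellation over the local fields.
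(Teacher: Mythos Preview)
The paper does not prove Theorem~\ref{t3.3} at all: it is stated as background and attributed to Cassels~\cite[Theorem~1.2]{Kas78}, with no argument given. So there is nothing to compare your proof against on the paper's side.

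That said, your argument is the standard and correct one. The reduction of representability of $a$ by $f$ to isotropy of $f \perp \langle -a \rangle$, followed by an appeal to the strong Hasse principle (Theorem~\ref{t3.2}), and then induction on the dimension using Witt cancellation over each completion, is exactly how this result is usually derived (and essentially how it is done in~\cite{Kas78}). The only place one might want an extra word is the base case $n=1$: you correctly observe that $\langle a\rangle \cong \langle b\rangle$ over a field is equivalent to $\langle a\rangle$ representing $b$, so the representability principle already settles it. Everything else is clean.
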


\section{Methods of testing a lattice for $(1{,}2)$-reflectivity}
\label{s4}

\subsection{Vinberg's algorithm}
\label{ss4.1}

As it was said before, in 1972, Vinberg suggested an algorithm of constructing the fundamental
polyhedron of a hyperbolic reflection group. This algorithm is theoretically applicable to
any hyperbolic reflection group, but practically it is efficient only for groups of the
form $\mathcal{O}_r (L)$ (and also of the form $\mathcal{O}^{(2)}_r (L)$, etc.).

In this subsection we describe Vinberg's algorithm, following \cite{Vin72} and \cite{Vin73}.
We pick a point $v_0 \in \mathbb{L}^n$, which we shall call a
\textit{a basic point}. The fundamental domain $P_0$ of its stabilizer
$\mathcal{O}_r(L)_{v_0}$ is a polyhedral cone in $\mathbb{L}^n$. Let
$H_1, \dots , H_m$ be the sides of this cone and let $a_1, \dots , a_m$ be the
corresponding outer normals. We define the half-spaces
$$
H_k^-=\{x \in \mathbb{E}^{n,1}\mid (x, a_k) \le 0 \}.
$$
Then $P_0=\bigcap_{j=1}^m H_j^-$.

There is the unique fundamental polyhedron $P$ of the group
$\mathcal{O}_r(L)$ contained in~$P_0$ and
containing the point $v_0$. Its faces containing $v_0$ are formed by the cone faces
$H_1, \ldots, H_m$. The other faces $H_{m+1}, \ldots $ and the corresponding outer normals
 $a_{m+1} , \ldots $ are constructed by induction. Namely, for $H_j$ we take a mirror
 such that the root $a_j$ orthogonal to it satisfies the conditions:

1)~$(v_0, a_j) < 0$;

2)~$(a_i, a_j) \le 0$ for all $i < j$;

3)~the distance $\rho(v_0, H_j)$  is minimal subject to constraints~1) and~2).

The lengths of the roots of a lattice $L$ satisfy the following condition.

\begin{proposition}[{\rm(Vinberg, see \cite[Proposition~24]{Vin84})}]
\label{p4.1}
The squares of lengths
of roots in the quadratic lattice $L$ are divisors
of the doubled last invariant factor of $L$.
\end{proposition}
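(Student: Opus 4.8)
The plan is to deduce the divisibility directly from the defining property of a root, together with the characterization of the last invariant factor $\delta$ of $L$ as the exponent of the finite abelian group $L^*/L$. Recall that if the Gram matrix of a basis of $L$ has Smith normal form $\operatorname{diag}(d_1,\dots,d_n)$ with $d_1 \mid d_2 \mid \dots \mid d_n$, then $L^*/L \cong \bigoplus_i \mathbb{Z}/d_i\mathbb{Z}$, so the exponent of this group equals $\operatorname{lcm}(d_1,\dots,d_n) = d_n$, which is by definition the last invariant factor $\delta$ of $L$. In particular $\delta\, L^* \subseteq L$, and $\delta$ is the smallest positive integer with this property.

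Now let $e \in L$ be a root with $k = (e,e) \in \mathbb{N}$. By definition $2(e,x) \in k\mathbb{Z}$ for every $x \in L$, which says precisely that the linear functional $x \mapsto \tfrac{2}{k}(e,x)$ is $\mathbb{Z}$-valued on $L$; equivalently, $\tfrac{2}{k}\,e \in L^*$. Multiplying by $\delta$ and using $\delta\, L^* \subseteq L$ gives $\tfrac{2\delta}{k}\,e \in L$. Since $e$ is primitive in $L$, it extends to a $\mathbb{Z}$-basis of $L$, so the only rational multiples of $e$ lying in $L$ are the integer multiples; hence $\tfrac{2\delta}{k} \in \mathbb{Z}$, that is, $k \mid 2\delta$. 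This is exactly the assertion that $(e,e)$ divides the doubled last invariant factor of $L$.

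There is no serious obstacle in this argument; the only point requiring care is the identification of $\delta$ with the exponent of the discriminant group $L^*/L$ — rather than merely with its order $|L^*/L| = |d(L)|$ — since it is this sharper description that yields the bound $2\delta$. (If one only used $|d(L)|\,L^* \subseteq L$ one would get the much weaker conclusion $k \mid 2|d(L)|$.) Everything else is the routine interplay between the root condition, the adjoint lattice $L^*$, and primitivity.
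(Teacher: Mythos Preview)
Your argument is correct and is the standard proof of this fact. The paper itself does not supply a proof of Proposition~\ref{p4.1}; it is stated with a citation to Vinberg~\cite[Proposition~24]{Vin84}, so there is nothing to compare against beyond noting that your derivation---from the root condition $\tfrac{2}{k}e\in L^*$, the inclusion $\delta L^*\subseteq L$ coming from the identification of the last invariant factor $\delta$ with the exponent of $L^*/L$, and the primitivity of $e$---is exactly the expected one.
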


The fundamental polyhedron of a group $\mathcal{O}_r (L)$ is a Coxeter polyhedron and is
determined by its Coxeter diagram. A polyhedron is of finite volume if and only if it is
the convex span of a finite number of usual points or points at infinity of the space $\mathbb{L}^n$.
Every vertex of a finite-volume Coxeter polyhedron corresponds either to an elliptic subdiagram
or rank $n$ (usual vertices) of its Coxeter diagram or to a parabolic subdiagram
or rank $n-1$ (vertices at infinity). Thus, according to the Coxeter diagram, one can determine
whether a polyhedron has a finite volume. For more details, see, for example, Vinberg's papers
 \cite{Vin67} and \cite{Vin85}.

Some efforts to implement Vinberg's algorithm by using a computer have been made since
the 1980s, but they all dealt with particular lattices, usually with an orthogonal basis.
Such programs are mentioned, e.g., in the papers of Bugaenko (1992, see \cite{Bug92}),
Scharlau and Walhorn (1992, see \cite{SW92}),
Nikulin (2000, see \cite{Nik00}), and Allcock (2011, see \cite{All12}). But the programs
themselves have not been published; the only
exception is Nikulin's paper, which contains a program code for lattices of several different special
forms. The only known implementation published along with a detailed documentation is
Guglielmetti's 2016
program\footnote{see\ \href{https://rgugliel.github.io/AlVin}{https:/\!/rgugliel.github.io/AlVin}},
processing hyperbolic lattices with an orthogonal basis with square-free invariant
factors over several ground fields.
Guglielmetti used this program in his thesis (2017, see \cite{Gug17}) to classify reflective hyperbolic lattices
with an orthogonal basis with small lengths of its elements.
His program works fairly well in all dimensions in which
reflective lattices exist.

In this paper, we use the program created in 2017 by the author jointly with
A.\,Yu.~Perepechko. This program is available on the Internet
(see \cite{VinAlg2017}), and one can find its detailed description in \cite{BP18}.

\subsection{The method of ``bad'' reflections}
\label{ss4.2}

If we can construct the fundamental polyhedron of the group $O_r (L)$
for some reflective lattice $L$, then it is easy to determine whether it is $(1{,}2)$-reflective.
One can consider the group $\Delta$ generated by the $k$-reflections for $k > 2$ (we shall call them
``bad'' reflections)
in the sides of the fundamental polyhedron of the group $O_r (L)$.
The following lemma holds (see~\cite{Vin07}).

\begin{lemma}
\label{l4.1}
A lattice $L$ is $(1{,}2)$-reflective if and only if it is reflective and the group
$\Delta$ is finite.
\end{lemma}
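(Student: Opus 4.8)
The plan is to establish both implications of Lemma~\ref{l4.1} by relating the finite-volume property of the fundamental polyhedron of $\mathcal{O}^{(1{,}2)}_r(L)$ to the finiteness of the group $\Delta$ generated by the ``bad'' reflections. First I would recall the basic dichotomy already laid out in \S\ref{ss1.1}: the lattice $L$ is $(1{,}2)$-reflective precisely when $\mathcal{O}^{(1{,}2)}_r(L)$ has finite index in $\mathcal{O}'(L)$, equivalently when the fundamental polyhedron $P^{(1{,}2)}$ of $\mathcal{O}^{(1{,}2)}_r(L)$ has finite volume. If $L$ is $(1{,}2)$-reflective, then a fortiori it is reflective, since $\mathcal{O}^{(1{,}2)}_r(L) \le \mathcal{O}_r(L)$ forces $\mathcal{O}_r(L)$ to have finite index as well; so the content is to pin down when, \emph{given} reflectivity, we also get $(1{,}2)$-reflectivity, and this is where $\Delta$ enters.

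The key structural observation is that the reflection group $\mathcal{O}_r(L)$ decomposes as a semidirect product of the normal subgroup $\mathcal{O}^{(1{,}2)}_r(L)$ with the stabilizer of the fundamental polyhedron $P$ of $\mathcal{O}_r(L)$ acting on it; more precisely, $\mathcal{O}^{(1{,}2)}_r(L)$ is generated by the $1$- and $2$-reflections in the facets of $P$ together with their $\mathcal{O}_r(L)$-conjugates, and the quotient is governed by the symmetries of $P$ coming from the ``bad'' reflections. Concretely, I would argue that the fundamental polyhedron $P^{(1{,}2)}$ of $\mathcal{O}^{(1{,}2)}_r(L)$ is obtained from $P$ by adjoining the images of $P$ under the group $\Delta$: since every bad reflection $\mathcal{R}_e$ fixes a facet of $P$ and lies in $\mathcal{O}_r(L)$ but not in $\mathcal{O}^{(1{,}2)}_r(L)$, the union $\bigcup_{\gamma \in \Delta} \gamma P$ is a fundamental domain for $\mathcal{O}^{(1{,}2)}_r(L)$, up to checking that $\Delta$ together with the $1$- and $2$-reflections generates all of $\mathcal{O}_r(L)$. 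This last point follows because the facets of $P$ are partitioned into those whose reflection is a $1$- or $2$-reflection and those whose reflection is bad: the former generate $\mathcal{O}^{(1{,}2)}_r(L)$ after conjugation, the latter generate $\Delta$, and together all facet reflections generate $\mathcal{O}_r(L)$.

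From this description the equivalence is immediate. If $L$ is reflective, then $P$ has finite volume; if in addition $\Delta$ is finite, then $P^{(1{,}2)} = \bigcup_{\gamma \in \Delta}\gamma P$ is a finite union of finite-volume polyhedra, hence has finite volume, so $L$ is $(1{,}2)$-reflective. Conversely, if $L$ is $(1{,}2)$-reflective, then $P^{(1{,}2)}$ has finite volume, and since $[\mathcal{O}^{(1{,}2)}_r(L) : 1]$ relates to $[\mathcal{O}_r(L):1]$ through the index $[\mathcal{O}_r(L) : \mathcal{O}^{(1{,}2)}_r(L)]$, which equals $|\Delta|$ by the semidirect product structure, the finiteness of $\Vol(P)/\Vol(P^{(1{,}2)}) = 1/|\Delta|$ forces $|\Delta| < \infty$; reflectivity of $L$ also follows as noted. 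The main obstacle I anticipate is the careful verification that $\Delta$ is genuinely a \emph{complement} to $\mathcal{O}^{(1{,}2)}_r(L)$ in $\mathcal{O}_r(L)$ — i.e.\ that there is no hidden interaction forcing a relation between bad reflections and $1$,$2$-reflections that would make $\Delta$ larger or smaller than the naive count — which requires invoking the Coxeter-group normal-form theory for $\mathcal{O}_r(L)$ (its facet reflections are Coxeter generators, so subgroups generated by subsets of generators are themselves the obvious parabolic-type subgroups) to conclude that $\mathcal{O}^{(1{,}2)}_r(L) \cap \Delta = \{1\}$ and that the two subgroups generate everything. Since this is exactly the content cited from Vinberg~\cite{Vin07}, I would lean on that reference for the group-theoretic bookkeeping and keep the volume argument as the explicit part.
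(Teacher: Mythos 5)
The paper does not actually prove Lemma~\ref{l4.1}: it is quoted with a bare reference to Vinberg~\cite{Vin07}, so your proposal has to be judged on its own merits. Its overall shape is the right one. If one knows that $Q:=\bigcup_{\gamma\in\Delta}\gamma P$ is a fundamental domain for $\mathcal{O}^{(1{,}2)}_r(L)$ (equivalently, that $\mathcal{O}_r(L)=\mathcal{O}^{(1{,}2)}_r(L)\rtimes\Delta$), then both directions follow as you say; and note that the easy direction needs much less: every facet reflection of $P$ is either a $1$- or $2$-reflection or a bad one, so $\mathcal{O}_r(L)=\mathcal{O}^{(1{,}2)}_r(L)\cdot\Delta$ by normality alone, whence $[\mathcal{O}_r(L):\mathcal{O}^{(1{,}2)}_r(L)]\le|\Delta|$ without any complement property. (Also, the stabilizer you should be invoking is that of the large polyhedron $P^{(1{,}2)}$, not of $P$: the stabilizer of $P$ inside $\mathcal{O}_r(L)$ is trivial.)

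The genuine gap is in your justification of the crucial claim that bad facet reflections (and products of them) do not lie in $\mathcal{O}^{(1{,}2)}_r(L)$, i.e.\ $\Delta\cap\mathcal{O}^{(1{,}2)}_r(L)=\{1\}$. The argument you sketch --- ``subgroups generated by subsets of the Coxeter generators are the obvious parabolic-type subgroups'' --- only says that $\Delta$ is the Coxeter group on the bad nodes of the diagram of $P$; it says nothing about its intersection with the \emph{normal closure} of the good nodes, which is what $\mathcal{O}^{(1{,}2)}_r(L)$ is. In a general Coxeter group that intersection can be nontrivial: in a dihedral group with odd bond $m$ the normal closure of one generator is already the whole group, so it meets the other ``parabolic''. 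What saves the statement here is arithmetic, not abstract Coxeter theory: conjugation in $\mathcal{O}'(L)$ preserves the square length of a primitive root, so a $1$- or $2$-reflection is never conjugate to a $k$-reflection with $k>2$; hence every bond between a good and a bad facet reflection of $P$ is even or infinite, which yields a retraction of $\mathcal{O}_r(L)$ onto $\Delta$ sending all good generators to $1$, and this gives exactly $\Delta\cap\mathcal{O}^{(1{,}2)}_r(L)=\{1\}$. Equivalently, geometrically: all mirrors of reflections in $\mathcal{O}^{(1{,}2)}_r(L)$ are mirrors of $1$- or $2$-reflections, while the walls separating $P$ from $\gamma P$ for $\gamma\in\Delta$ are mirrors of bad reflections, so $Q$ lies inside the fundamental polyhedron of $\mathcal{O}^{(1{,}2)}_r(L)$ and $\Vol\ge|\Delta|\cdot\Vol(P)$, which is the inequality your converse direction needs. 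Without one of these arguments (or an honest wholesale citation of \cite{Vin07}, which is all the paper itself does), the ``only if'' direction of your proof is not established.
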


Actually, to prove that a lattice is not $(1{,}2)$-reflective,
it is sufficient to construct only some part of the fundamental polyhedron containing 
an infinite subgroup generated by bad reflections.

\section{Short list of candidate-lattices}
\label{s5}

\subsection{Plan for finding a short list}
\label{ss5.1}
Let $P$ be the fundamental polyhedron of the group $\mathcal{O}^{(1{,}2)}_r (L)$
for an anisotropic hyperbolic lattice $L$ of rank $4$.
This lattice is $(1{,}2)$-reflective if and only if $P$ is compact.
By Theorem~\ref{t2.3},
every such polyhedron contains an edge $E$ of width less than
 $t_{\overline \alpha}$, where $t_{\overline \alpha} \leq 4.14$ is the number
 depending on the set $\overline \alpha$ of dihedral angles around this edge.

Let $u_1$, $u_2$, $u_3$, $u_4$ be the roots of the lattice $L$ that are the outer
normals to the faces $F_1$, $F_2$, $F_3$, $F_4$, respectively. These roots generate
some sublattice
$$
L'=[G(u_1, u_2, u_3, u_4)] \subset L.
$$

Note that the elements of the Gram matrix $G(u_1, u_2, u_3, u_4)$
can assume only finitely many different values.
Namely, the diagonal elements can equal only~$1$ or $2$,
and the absolute values of the remaining elements $g_{ij}$ must be
strictly less than $\sqrt{g_{ii} g_{jj}}$, excepting $g_{34}=(u_3,
u_4)$, whose absolute value is bounded by $t_{\overline \alpha} \sqrt{(u_3,u_3) (u_4,
u_4)}$.

Thus, we obtain a finite list of matrices $G(u_1, u_2, u_3, u_4)$.
We pick in these matrices the ones that define anisotropic lattices, and
after that we find all their possible extensions.

In order to select only anisotropic lattices, we use a computer program%
\footnote{N.~Bogachev, \textit{Method of the outermost edge/is\_anisotropic},
\href{https://github.com/nvbogachev/OuterMostEdge/blob/master/is\_anisotropic}
{https:/\!/github.com/\allowbreak nvbogachev/\allowbreak OuterMostEdge/\allowbreak
blob/\allowbreak master/is\_anisotropic}, 2017},
based on the results and methods formulated in~\S\,\ref{s3} of this paper.

We split the list of all anisotropic lattices into isomorphism classes and
take only one representative of each class.  So, now we obtain
a substantially shorter list of anisotropic lattices that are pairwise non-isomorphic.

After that we find all their finite-index extensions
and verify the resulting list of candidate-lattices on $(1{,}2)$-reflectivity
using the methods described in~\S\,\ref{s4}.

\subsection{Short list of candidate-lattices}

The final program that  creates a list of numbers $t_{\overline \alpha}$ and
then, using
this list, displays all Gram matrices $G(u_1, u_2, u_3, u_4)$, is also
available on the Internet%
\footnote{N.~Bogachev, \textit{Method of the outermost edge/CandidatesFor12Reflectivity},
\href{https://github.com/nvbogachev/OuterMostEdge/blob/master/Is_equival}
{https:/\!/github.com/\allowbreak nvbogachev/\allowbreak
OuterMostEdge/blob/master/Is\_equival}, 2017}.

As the output we obtain matrices  $G_1$--$G_{7}$, for each of which we find all corresponding 
extensions. 

To each Gram matrix $G_k$ in our notation, there corresponds a lattice $L_k$ that can have some
other extensions. For each new anisotropic lattice 
(non-isomorphic to any previously found lattice) we introduce the notation 
$L(k)$, where $k$ denotes its number:
{\allowdisplaybreaks 
\begin{align*}
G_{1} &=
\begin{pmatrix}
1 & 0 & 0 & -1\\
0 & 2 & 0 & -1\\
0 & 0 & 1 & -2\\
-1 & -1 & -2 & 2
\end{pmatrix}, \qquad L_1 \simeq [-7] \oplus [1] \oplus [1] \oplus [1] := L(1),
\\
G_{2} &=
\begin{pmatrix}
1 & -1 & 0 & 0\\
-1 & 2 & 0 & -1\\
0 & 0 & 1 & -4\\
0 & -1 & -4 & 2
\end{pmatrix}, \qquad L_2 \simeq [-15] \oplus [1] \oplus [1] \oplus [1] := L(2),
\\
G_{3} &=
\begin{pmatrix}
1 & 0 & 0 & -1\\
0 & 2 & -1 & 0\\
0 & -1 & 2 & -3\\
-1 & 0 & -3 & 2
\end{pmatrix}, \qquad L_3 \simeq [-3] \oplus [5] \oplus [1] \oplus [1] := L(3),
\\
G_{4} &=
\begin{pmatrix}
2 & 0 & 0 & -1\\
0 & 2 & -1 & -1\\
0 & -1 & 1 & -2\\
-1 & -1 & -2 & 2
\end{pmatrix}, \qquad L_4 \simeq [-23] \oplus [1] \oplus [1] \oplus [1] := L(4),
\\
G_{5} &=
\begin{pmatrix}
2 & -1 & 0 & -1\\
-1 & 2 & -1 & 0\\
0 & -1 & 1 & -4\\
-1 & 0 & -4 & 2
\end{pmatrix}, \qquad L_5 \simeq [-55] \oplus [1] \oplus [1] \oplus [1] := L(5),
\\
G_{6} &=
\begin{pmatrix}
2 & 0 & 0 & -1\\
0 & 2 & 0 & -1\\
0 & 0 & 2 & -3\\
-1 & -1 & -3 & 2
\end{pmatrix}, \qquad L_6=[G_{6}] := L(6),
\\
G_{7} &=
\begin{pmatrix}
2 & 0 & -1 & -1\\
0 & 2 & -1 & -1\\
-1 & -1 & 2 &-3\\
-1 & -1 & -3 & 2
\end{pmatrix} , \qquad L_7=[G_{7}] := L(7).
\end{align*}
} 

The lattices  $L(1)$--$L(5)$ are maximal. The lattice $L(6)$ has a unique extension of index  $2$
generated by the vectors
\begin{equation}
\label{eq7}
\biggl\{\frac{e_1+e_2}{2}, \frac{e_1-e_2}{2}, e_3, e_4\biggr\},
\end{equation}
where $\{e_1, e_2, e_3, e_4\}$ is a basis of the lattice $L(6)$. The Gram matrix of system 
\eqref{eq7} is equivalent to the matrix $\mathrm{diag}(-7,1,1,1)$, whence it follows that
the only extension in this case is the lattice $L(1)$. Similarly, for the lattice 
 $L(7)$ we find a unique extension of index $2$ generated by the vectors
\begin{equation}
\label{eq8}
\biggl\{\frac{e_1+e_2}{2}, \frac{e_1-e_2}{2}, e_3, e_4\biggr\}
\end{equation}
that is isomorphic to $L(2)$; here $\{e_1, e_2, e_3, e_4\}$ is a basis of the lattice  $L(7)$.

We observe that the lattices $L(6)$ and $L(7)$ are (unique) even sublattices of index $2$ 
of the lattices $L(1)$ and $L(2)$, respectively. 

\section{Verification of $(1{,}2)$-reflectivity and proof of Theorem~\ref{t1.2}}
\label{s6}

It remains to verify for $(1{,}2)$-reflectivity a small number of lattices. 
The lattices  $L(1)$, $L(2)$, $L(6)$ and $L(7)$ are
$2$-reflective (see \cite{Vin07}), hence, are $(1{,}2)$-reflective.

\begin{proposition}
\label{p6.1}
The lattice  $L(3)=[-3] \oplus [5] \oplus [1] \oplus [1]$ is reflective, but not
$(1{,}2)$-reflective.
\end{proposition}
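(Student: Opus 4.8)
The plan is to run Vinberg's algorithm (\S\,\ref{ss4.1}) on the lattice $L(3) = [-3] \oplus [5] \oplus [1] \oplus [1]$ using the computer implementation \cite{VinAlg2017}, and then to apply the method of ``bad'' reflections (\S\,\ref{ss4.2}) to the resulting Coxeter polyhedron. First I would fix a convenient basic point $v_0 \in \mathbb{L}^3$ and list the admissible root lengths: by Proposition~\ref{p4.1} the squares of lengths of roots divide twice the last invariant factor of $L(3)$, which here restricts the possible values of $(e,e)$ to a short explicit list (dividing $2 \cdot 15 = 30$, and in fact only a few of these occur). Running the algorithm produces finitely many faces whose Gram matrix is a Coxeter matrix, and one then checks — by examining elliptic and parabolic subdiagrams of rank $n=3$ and $n-1=2$ respectively — that the polyhedron $P$ is bounded, i.e.\ that $L(3)$ is reflective. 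This establishes the first half of the proposition.

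Next, to show $L(3)$ is \emph{not} $(1{,}2)$-reflective, by Lemma~\ref{l4.1} it suffices to exhibit that the subgroup $\Delta$ generated by the $k$-reflections with $k > 2$ in the sides of $P$ is infinite. Here I would single out the faces of $P$ whose normals are $k$-roots with $k \geq 3$ (given the lattice $[-3] \oplus [5] \oplus \dots$, the relevant roots will have length-squared $3$ or $5$), and look at the sub-diagram of the Coxeter diagram spanned by these ``bad'' mirrors. It is enough to find two such mirrors that are either parallel (a dotted/bold edge in the diagram, corresponding to a parabolic or hyperbolic pair) or divergent: the product of the two reflections then has infinite order, so $\Delta$ is infinite, and by Lemma~\ref{l4.1} the lattice fails to be $(1{,}2)$-reflective. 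In practice, as noted after Lemma~\ref{l4.1}, one need not even complete the whole polyhedron — it suffices to carry the algorithm far enough to uncover such a pair of bad mirrors.

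The main obstacle is the bookkeeping in the Vinberg-algorithm computation itself: one must be careful that the enumeration of candidate roots at each step is complete (no root of minimal distance is missed), that the acuteness conditions $(a_i, a_j) \leq 0$ are correctly imposed, and — crucially for the finiteness/compactness claim — that every vertex of the diagram is accounted for by a rank-$3$ elliptic subdiagram, with no parabolic subdiagram of rank $2$ lurking (which would signal a cusp and contradict compactness). Since all of this is a finite, verifiable computation, I would present the output Coxeter diagram of $P$, mark the bad ($k \geq 3$) mirrors on it, and point to the explicit pair of divergent bad mirrors; the verification that this pair generates an infinite group is then immediate and completes the proof.
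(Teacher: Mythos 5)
Your proposal follows essentially the same route as the paper: run Vinberg's algorithm on $L(3)$ (the paper finds seven roots and a bounded Coxeter polyhedron, confirming reflectivity), then apply Lemma~\ref{l4.1} by exhibiting divergent ``bad'' mirrors — in the paper these are among the roots $a_3$, $a_4$, $a_7$ of norms $5$, $6$, $5$ (so norm $6$ also occurs, not just $3$ or $5$), with a dotted edge in the subdiagram showing $\Delta$ is infinite. The approach and the key steps are correct and match the paper's proof.
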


\begin{proof}
For the lattice  $L(3)$ we apply Vinberg's algorithm. Our program finds seven roots:
\begin{gather*}
\begin{alignedat}{4}
a_1&=(0;0,0,-1), &\quad (a_1,a_1)&=1, &\qquad a_2&=(0;0,-1,1), &\quad (a_2,a_2)&=2,
\\
a_3&=(0;-1,0,0), &\quad (a_3,a_3)&=5, &\qquad a_4&=(1;0,3,0), &\quad (a_4,a_4)&=6,
\\
a_5&=(1;1,0,0), &\quad (a_5,a_5)&=2, &\qquad a_6&=(2;1,2,2), &\quad (a_6,a_6)&=1,
\end{alignedat}
\\
a_7=(10;6,10,5), \quad (a_7,a_7)=5.
\end{gather*}
The Gram matrix of these roots has the form
$$
G(a_1,a_2,a_3,a_4,a_5,a_6,a_7) =
\begin{pmatrix}
1 & -1 & 0 & 0 & 0 & -2 & -5\\
-1 & 2 & 0 & -3 & 0 & 0 & -5\\
0 & 0 & 5 & 0 & -5 & -5 &-30\\
0 & -3 & 0 & 6 & -3& 0 & 0\\
0 & 0 &-5 & -3 & 2 & -1 & 0\\
-2 & 0 & -5 & 0 & -1 & 1 & 0\\
-5 & -5 & -30 & 0 & 0 & 0 & 5
\end{pmatrix}.
$$
The Coxeter diagram corresponding to this Gram matrix is represented in Figure~\ref{fig5}.

\begin{figure}[!htp]
\begin{center}
\includegraphics{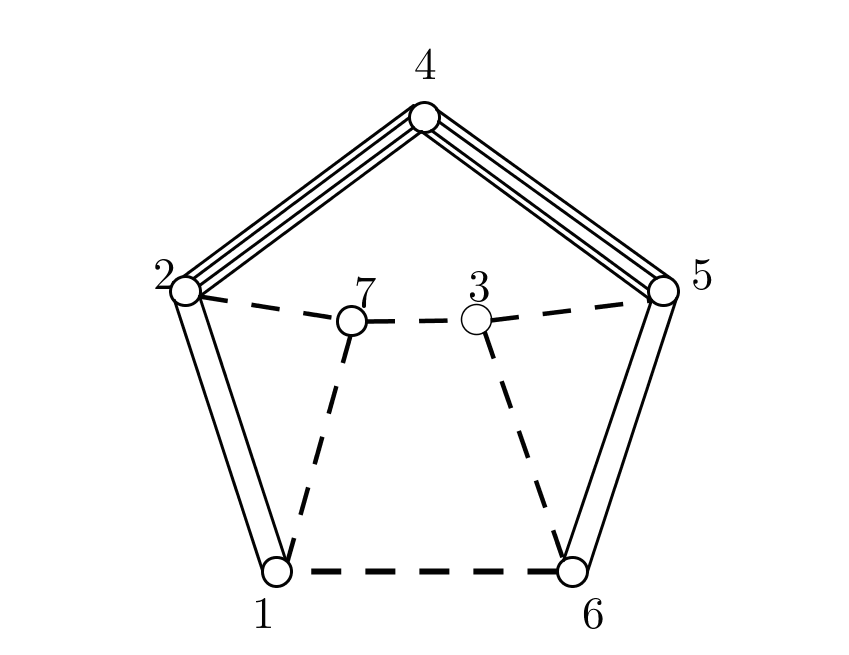}
\caption{The Coxeter diagram of the fundamental polyhedron of the lattice $L(3)$}
\label{fig5}
\end{center}
\end{figure}

It determines a three-dimensional Coxeter polyhedron of finite volume. This diagram has no
parabolic subdiagrams, so this polyhedron is bounded 
(however, this is verified by CoxIter, created and published by 
R.~Guglielmetti). We observe that the roots  $a_3$, $a_4$, $a_7$ determine a group generated 
by ``bad'' reflections, it is infinite, since the corresponding subdiagram 
(its vertices are colored in black) contains a dotted edge. Therefore, the lattice 
$L(3)$ is reflective, but not $(1{,}2)$-reflective. The proposition is proved. 
\end{proof}

The non-reflectivity of the lattice $L(4)$ was proved in~\cite{Mark15}
(see also the dissertation \cite{Mcl13}).

\begin{proposition}
\label{p6.2}
The lattice  $L(5)=[-55] \oplus [1] \oplus [1] \oplus [1]$ is not 
$(1{,}2)$-reflective. 
\end{proposition}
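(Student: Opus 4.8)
The plan is to run Vinberg's algorithm for the lattice $L(5) = [-55] \oplus [1] \oplus [1] \oplus [1]$ exactly as was done for $L(3)$ in Proposition~\ref{p6.1}, but here we need only produce enough of the fundamental polyhedron to exhibit an \emph{infinite} subgroup generated by ``bad'' reflections. By Lemma~\ref{l4.1}, since $(1{,}2)$-reflectivity requires the group $\Delta$ generated by $k$-reflections with $k > 2$ to be finite, it suffices to find a finite collection of roots $a_{i_1}, \dots, a_{i_m}$ among the roots produced by the algorithm, all of square-length $> 2$, whose Coxeter subdiagram is not elliptic — for instance one containing a dotted edge (two divergent mirrors) or a parabolic piece. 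First I would apply Proposition~\ref{p4.1}: the last invariant factor of $L(5)$ is $55$, so root lengths divide $2 \cdot 55 = 110$, i.e.\ lie in $\{1, 2, 5, 10, 11, 22, 55, 110\}$; the ``bad'' lengths to watch are $5, 10, 11, 22, 55, 110$. Then I would pick the standard basic point $v_0$ (the first basis vector, corresponding to the $[-55]$ summand) and let the program \cite{VinAlg2017} enumerate the roots $a_1, a_2, \dots$ in order of increasing distance from $v_0$, subject to the acuteness conditions, recording each root's coordinates and square-length.

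Next I would present the Gram matrix of the roots found, together with the associated Coxeter diagram, and point to a specific subdiagram on the ``bad'' vertices that is not elliptic. Concretely, I expect that among the early roots there will be two divergent $k$-mirrors ($k > 2$) — their inner product exceeding $\sqrt{(a_i,a_i)(a_j,a_j)}$ in absolute value — giving a dotted edge in the diagram; alternatively a parabolic subdiagram (an affine Coxeter configuration such as $\widetilde{A}_1$, $\widetilde{A}_2$, or $\widetilde{B}_2$) among bad roots would equally force $\Delta$ to be infinite. Either phenomenon yields an infinite reflection group inside $\Delta$, so by Lemma~\ref{l4.1} the lattice is not $(1{,}2)$-reflective. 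As in Proposition~\ref{p6.1}, I would note that the finiteness or infiniteness of the relevant subgroup, and the combinatorics of the diagram, can be independently checked with CoxIter (Guglielmetti), so the computer-assisted part is verifiable.

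One subtlety, compared with $L(3)$, is that we need not decide whether $L(5)$ is reflective at all: Theorem~\ref{t1.2} only needs to exclude $L(5)$ from the list of $(1{,}2)$-reflective lattices, and Lemma~\ref{l4.1} tells us that failure of the $\Delta$-finiteness condition already does this, whether or not $\mathcal{O}_r(L(5))$ itself has finite covolume. (Indeed $[-55]\oplus[1]\oplus[1]\oplus[1]$ is in fact non-reflective, but we do not need that here, and proving it would require running Vinberg's algorithm to completion, which is not necessary.) So the argument is actually shorter than for $L(3)$: run the algorithm just far enough to see a bad configuration, then stop.

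The main obstacle I anticipate is purely computational bookkeeping: because the discriminant $55$ is larger than in the other cases, the roots of small bad length may appear only after several rounds of the algorithm, and one must be careful that the partial polyhedron constructed is genuinely a subpolyhedron of the true fundamental domain — i.e.\ that all the acuteness conditions (2) in Vinberg's algorithm are respected and no root of smaller distance has been skipped. Once a bad non-elliptic subdiagram is exhibited, the conclusion is immediate from Lemma~\ref{l4.1}; the real work is producing and double-checking that explicit list of roots and their Gram matrix.
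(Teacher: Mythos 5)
Your proposal follows essentially the same route as the paper: the paper runs Vinberg's algorithm only far enough to produce eight roots and then observes that the two length-$5$ roots $a_7$ and $a_8$ have divergent mirrors, so the subgroup of $\Delta$ they generate is infinite and Lemma~\ref{l4.1} (via the remark that a partial fundamental polyhedron suffices) rules out $(1{,}2)$-reflectivity. The only thing missing from your write-up is the explicit list of roots and their Gram matrix, which is exactly the computational bookkeeping you correctly identify as the remaining work.
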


\begin{proof}
It turns out that for verification of the $(1{,}2)$-reflectivity there is no need to complete Vinberg's 
algorithm. The program we use finds the first eight roots 
\begin{alignat*}{4}
a_1&=(0, -1, -1, 0), &\quad (a_1,a_1)&=2;&\qquad a_2&=(0, 0, 1, -1), &\quad (a_2,a_2)&=2;
\\
a_3&=(0, 1, 0, 0), &\quad (a_3,a_3)&=1;&\qquad a_4&=(2, 0, 11, 11), &\quad (a_4,a_4)&=22;
\\
a_5&=(1, -4, 4, 5), &\quad (a_5,a_5)&=2;&\qquad a_6&=(1, -2, 2, 7), &\quad (a_6,a_6)&=2;
\\
a_7&=(2, -5, 10, 10), &\quad (a_7,a_7)&=5;&\qquad a_8&=(2, 0, 0, 15), &\quad
(a_8,a_8)&=5;
\end{alignat*}
the Gram matrix of which has the form
$$
G(a_1,a_2,a_3,a_4,a_5,a_6,a_7,a_8){=}\!\!\begin{pmatrix}
2 & -1 & -1 & -11 & 0 & 0 &-5 & 0\\
 -1 & 2 & 0 & 0 & -1 & -5 & 0 &-15\\
-1 & 0 & 1 & 0 & -4 &-2 &-5 & 0 \\
-11 & 0 & 0 & 22 & -11 & -11 & 0 & -55\\
0 & -1& -4 & -11 & 2 & -4 & 0 & -35 \\
 0 & -5 & -2& -11& -4 & 2 & -10 & -5\\
-5 & 0& -5 & 0 & 0 &-10 & 5 &-70\\
0 & -15 & 0 & -55 & -35 & -5 & -70 & 5
\end{pmatrix}.
$$

It suffices to consider the subgroup generated by ``bad'' reflections with respect to the mirrors 
 $H_{a_7}$ and $H_{a_8}$. Since these mirrors diverge, this subgroup is infinite. The proposition is 
 proved. 
\end{proof}

Thus, among the seven anisotropic candidate-lattices selected in the process of solution only four 
are $(1{,}2)$-reflective. These are the lattices $L(1)$, $L(2)$, $L(6)$, $L(7)$. 
Theorem \ref{t1.2} is proved. 


\end{document}